\DeclareMathAlphabet{\mathpzc}{OT1}{pzc}{m}{it}
\numberwithin{equation}{section}
\def\eqnarray{\stepcounter{equation}\let\@currentlabel=\theequation
\global\@eqnswtrue
\tabskip\@centering\let\\=\@eqncr
$$\halign to \displaywidth\bgroup\hfil\global\@eqcnt\z@
  $\displaystyle\tabskip\z@{##}$&\global\@eqcnt\@ne
  \hfil$\displaystyle{{}##{}}$\hfil
  &\global\@eqcnt\tw@ $\displaystyle{##}$\hfil
  \tabskip\@centering&\llap{##}\tabskip\z@\cr}
\def\endeqnarray{\@@eqncr\egroup
      \global\advance\c@equation\m@ne$$\global\@ignoretrue}
\def\@yeqncr{\@ifnextchar [{\@xeqncr}{\@xeqncr[1pt]}}
\DeclareMathAlphabet\gothic{U}{euf}{m}{n}
\newtheorem{theorem}{Theorem}[section]
\newtheorem{definition}[theorem]{Definition}
\newtheorem{proposition}[theorem]{Proposition}
\newtheorem{remark}[theorem]{Remark}
\numberwithin{equation}{section}
\numberwithin{equation}{section}
\def\RR{{\mathbb{R}}}
\def\NN{{\mathbb{N}}}
\def\CC{{\mathbb{C}}}
\def\ZZ{{\mathbb{Z}}}
\title[Discrete evolution equations]{Convergence of solutions of discrete semi-linear space-time fractional evolution equations}
\author{Harbir Antil}
\address{H. Antil, Department of Mathematical Sciences, George Mason University, Fairfax, VA 22030, USA.}
\email{hantil@gmu.edu}
\author{Carlos Lizama}
\address{C. Lizama, Departamento de Matem\'atica, Universidad de Santiago de Chile, Casilla 307- Correo 2, Santiago, Chile.}
\email{carlos.lizama@usach.cl }
\author{Rodrigo Ponce}
\address{R. Ponce, Instituto de Matem\'atica y Fisica, Universidad de Talca, Casilla 747, Talca, Chile.}
\email{rponce@inst-mat.utalca.cl }
\author{Mahamadi Warma}
\address{M. Warma, University of Puerto Rico, Rio Piedras Campus, Department of Mathematics,  College of Natural Sciences,  17 University AVE. STE 1701  San Juan PR 00925-2537 (USA). }
\email{mahamadi.warma1@upr.edu, mjwarma@gmail.com}
\thanks{The work of HA is partially supported by the NSF grants DMS-1818772,
DMS-1913004 and the Air Force Office of Scientific Research under Award NO: FA9550-19-1-0036.}
\thanks{The work of CL is partially supported by the Conicyt under Fondecyt grant N0: 1180041}
\thanks{The work of MW is partially supported by the Air Force Office of Scientific Research under Award NO:  FA9550-18-1-0242}
\keywords{Fractional Laplacian, discrete fractional Laplacian, evolution equations, convergence of solutions, numerical approximation}
\subjclass[2010]{35R11, 47D07, 37L05, 49M25, 34B10}
\begin{document}

\begin{abstract}
Let $(-\Delta)_c^s$ be the realization of the fractional Laplace operator on the space of continuous functions $C_0(\RR)$, and let $(-\Delta_h)^s$ denote the discrete fractional Laplacian on $C_0(\ZZ_h)$, where $0<s<1$ and $\ZZ_h:=\{hj:\; j\in\ZZ\}$ is a mesh of fixed size $h>0$.
We show that solutions of fractional order semi-linear Cauchy problems  associated with the discrete operator $(-\Delta_h)^s$ on $C_0(\ZZ_h)$ converge to solutions of the corresponding Cauchy problems associated with the continuous operator $(-\Delta)_c^s$. In addition, we obtain that the convergence is uniform in $t$ in compact subsets of $[0,\infty)$. We also provide  numerical simulations that support our theoretical results.
\end{abstract}

\maketitle

\section{Introduction, notations and main results}

Fractional order operators have recently emerged as a modeling alternative in various branches of science and technology. In fact, in many situations, the fractional models reflect better the behavior of the system both in the deterministic and stochastic contexts.
A number of stochastic models for explaining anomalous diffusion have been
introduced in the literature; among them we mention  the fractional Brownian motion; the continuous time random walk;  the L\'evy flights; the Schneider grey Brownian motion; and more generally, random walk models based on evolution equations of single and distributed fractional order in  space (see e.g. \cite{DS,GR,Man,Sch}).  In general, a fractional diffusion operator corresponds to a diverging jump length variance in the random walk. Further applications include, imaging \cite{antil2017spectral, HAntil_CNRautenberg_2019a}, machine learning \cite{antil2019bilevel}, and geophysics \cite{CWeiss_BvBWaanders_HAntil_2018a}. We refer to \cite{Val,Gal-War,GW-CPDE} and the references therein for a complete analysis, the derivation and more applications of fractional order operators.

Numerical methods for fractional PDEs have recently received a great deal of attention, see \cite{JLZ18} and references therein. The study of convergence of discrete solutions  to continuous  ones for the associated {\it stationary} linear problem, that is, the time independent problem
\begin{align}\label{SP}
(-\Delta)^su=f \;\mbox{ in }\;\RR\;\;\mbox{ and }\; (-\Delta_h)^su_h=f  \;\mbox{ in }\;\ZZ_h,
\end{align}
where $0<s<1$ is a real number, $(-\Delta)^s$ denotes the fractional Laplace operator defined on the real line $\RR$ and,  for a fixed real number $h>0$, the operator $(-\Delta_h)^s$ is the discrete fractional Laplace operator  and $\ZZ_h:=\{hj:\; j\in\ZZ\}$ is a mesh of fixed size $h>0$, and $f$ does not depends on $u$ (resp. $u_h$), has been first and completely analyzed by Ciaurri et al \cite{Setal},  where the authors have proved the convergence of solutions in  $\ell^\infty$-spaces. They have also obtained explicit convergence rates.

 However, the study of the convergence of discrete to continuous solutions for the  non-stationary i.e., the time-dependent problem, is completely open.

The main concern of the present paper is to solve this open problem. We  show the convergence of solutions of a class of discrete space-time fractional evolution equations to the solutions of the corresponding equations associated with the continuous operator. More precisely, we consider the following two systems:
\begin{equation}\label{EE1}
\begin{cases}
\mathbb D_t^\alpha u+(-\Delta)^su=f(u)\;\;&\mbox{ in }\;(0,\infty)\times\RR,\\
u(0,\cdot)=u^0 &\mbox{ in }\;\RR,
\end{cases}
\end{equation}
and
\begin{equation}\label{EE2}
\begin{cases}
\mathbb D_t^\alpha u_h+(-\Delta_h)^su_h=f(u_h)\;\;&\mbox{ in }\;(0,\infty)\times \ZZ_h,\\
u_h(0,\cdot)=u_h^0&\mbox{ in }\;\ZZ_h,
\end{cases}
\end{equation}
where in \eqref{EE1}-\eqref{EE2}, $0<\alpha\le 1$ and $0<s<1$ are real numbers,  $\mathbb D_t^\alpha$ denotes the Caputo time-fractional derivative of order $\alpha$  and $f$ is a nonlinear function satisfying a local Lipschitz continuity condition. Notice that if $\alpha=1$, the above systems correspond to the semi-linear continuous and discrete heat equations, respectively.



  Our main result (Theorem \ref{main-theorem1}) states that if $0<s<1$, $0<\alpha\le 1$, $f\in C^1(\RR)$ with $f(0)=0$, then for every $u_h^0\in C_0(\ZZ_h)$, there exist $u^0\in C_0(\RR)$  and $\mathcal T>0$, such that the corresponding solutions $u$ and $u_h$ of the systems \eqref{EE1} and \eqref{EE2}, respectively, satisfy
\begin{align}\label{eee}
\lim_{h\downarrow 0}\sup_{0\le t\le\mathcal T}\|\mathcal R_hu(t)-u_h(t)\|_{C_0(\ZZ_h)}=0,
\end{align}
where $\mathcal R_h$ is a suitable mapping from $C_0(\RR)$ to $C_0(\ZZ_h)$ (see \eqref{mapP} below). In particular, we obtain that for $u_0\in C_0(\RR)$, then taking $u_h^0=\mathcal R_hu^0$, we have that the corresponding solutions also satisfy the convergence \eqref{eee}. The main tools we shall use are the convergence results for the stationary problem \eqref{SP} (see Theorem \ref{thm-Stinga}) and a suitable version of the Trotter-Kato approximation theorem.

We observe that
the existing theoretical error estimates are largely limited to either energy type norms or to the $L^2$-norm, see for instance, \cite{GA:JP15} where error estimates for finite element discretization has been carried out. There are no
existing pointwise error estimates for parabolic fractional PDEs, even though from a practical point of view, it is much
easier to observe the pointwise values. In fact, the techniques for pointwise error estimates are
significantly different than in the other norms \cite{SCBrenner_RLScott_1994a}. It is unclear how to extend such arguments to nonlocal/fractional PDEs since solutions of such PDEs do not enjoy enough regularity as the local PDEs, and therefore makes the numerical analysis of such equations very challenging. There are many papers where the authors have assumed that such PDEs have very smooth solutions in order to do the numerical analysis. We refrain from citing those works, but we point out that solutions of fractional PDEs involving $(-\Delta)^s$ do not have enough regularity as in the classical case $s=1$. We refer to \cite{RS-DP2,RS-DP} for more details on this topic.

The rest of the paper is structured as follows. In Section \ref{sec-not} we give some notations and introduce the function spaces needed to investigate our problem. We state the main results of the paper in Section \ref{sec-mr-state} and give some preliminary results in Section \ref{sec-pre} as they are needed throughout the paper. Section \ref{sec-pro} contains the proofs of our main results. Finally in Section \ref{numerics} we present some numerical simulations that confirm our theoretical findings.

\subsection{Notations}\label{sec-not}
In this section we fix some notations and introduce the function spaces needed to investigate our problem.
Let $0<s<1$ be a real number and $N\in\NN$. The continuous fractional Laplace operator in $\RR^N$ is defined by the following singular integral:
\begin{align}\label{FL}
(-\Delta)^su(x):=&C_{N,s}\mbox{P.V.}\int_{\RR}\frac{u(x)-u(y)}{|x-y|^{N+2s}}\;dy\notag\\
=&C_{N,s}\lim_{\varepsilon\downarrow 0}\int_{\{y\in\RR^N:\; |x-y|>\varepsilon\}}\frac{u(x)-u(y)}{|x-y|^{N+2s}}\;dy, \qquad x\in\RR,
\end{align}
provided that the limit exists for a.e. $x\in\RR^N$. The normalization constant $C_{N,s}$ is given by
\begin{align*}
C_{N,s}:=\frac{s2^{2s}\Gamma\left(\frac N2+s\right)}{\pi^{\frac N2}\Gamma(1-s)}.
\end{align*}
If $N=1$, then we shall denote $C_s:=C_{1,s}$.
We refer to \cite{Caf3,Grub,Grub2,RS-DP2,War} and the references therein for the class of functions for which the limit  in \eqref{FL} exists.


We consider a mesh of fixed side $h>0$ in $\RR$ given by
$$\ZZ_h:=\{hj:\;j\in \ZZ\}.$$
For a function $u:\ZZ_h\to\RR$ we define the discrete Laplace operator $\Delta_h$ by
\begin{align*}
-\Delta_hu(jh):=-\frac{u((j+1)h)-2u(jh)+u((j-1)h)}{h^2}.
\end{align*}
It is well-known that the operator $\Delta_h$ generates a strongly continuous submarkovian semigroup $(e^{-t\Delta_h})_{t\ge 0}$ on $\ell^p(\ZZ_h)$ ($1\le p\le\infty$) given by
\begin{align}\label{SG}
e^{t\Delta_h}f(jh)=e^{-\frac{2t}{h^2}}\sum_{n\in\ZZ}I_{n-j}\left(\frac{2 t}{h^2}\right)f(nh),\qquad t\ge 0,
\end{align}
where $I_\nu$ denotes the modified Bessel function of the first kind and of order $\nu$ (see e.g. \cite[Theorem 1.1 and Remark 1]{LR18} and the references therein).

For $0<s<1$, we define the fractional powers $(-\Delta_h)^s$ of $-\Delta_h$ on $\ZZ_h$ by
\begin{align}\label{dFL}
(-\Delta_h)^su(jh):=\frac{1}{\Gamma(-s)}\int_0^\infty \left(e^{t\Delta_h}u(jh)-u(jh)\right)\frac{dt}{t^{1+s}},
\end{align}
where we recall that $(e^{t\Delta_h})_{t\ge 0}$ is the submarkovian semigroup  given in \eqref{SG} and $\Gamma(-s)=-\frac{\Gamma(1-s)}{s}$.

Let
\begin{align*}
\ell_s:=\Big\{u:\ZZ_h\to\RR\mbox{ such that } \sum_{m\in\ZZ}\frac{|u(mh)|}{(1+|m|^{1+2s})}<\infty\Big\}.
\end{align*}
It has been shown in \cite[Theorem 1.1]{Setal} that if $u\in \ell_s$, then
\begin{align}\label{eq:DeltahR}
(-\Delta_h)^su(jh)=\sum_{m\in\ZZ, m\ne j}\Big(u(jh)-u(mh)\Big)R^s_h(j-m),
\end{align}
where the kernel $R^s_h$ is given by
\begin{equation*}
R^s_h(m):=
\begin{cases}
\displaystyle \frac{4^s\Gamma(\frac 12+s)}{\sqrt{\pi}|\Gamma(-s)|}\cdot\frac{\Gamma(|m|-s)}{h^{2s}\Gamma(|m|+1+s)},\;\;\;&\mbox{ if }\;m\in\ZZ\setminus\{0\},\\
=0&\mbox{ if }\; m=0.
\end{cases}
\end{equation*}
Equivalently, we can derive another pointwise
explicit formula of  $(-\Delta_h)^s$ as follows: (see e.g. \cite[Section 3.1]{LR18})
\begin{equation}\label{eq:DeltahK}
(-\Delta_h)^su(jh)= \sum_{n\in \ZZ} K^{s}_h(j-n)u(nh),
\end{equation}
where
$$
K^{s}_h(n) := \frac{(-1)^n \Gamma(2s+1)}{\Gamma(1+s+n)\Gamma(1+s-n) h^{2s}}.
$$
We notice that $K^{s}_h(n)= -R^{s}_h(n)$  for all $n\in \ZZ \setminus \{0\}$ and each $0<s<1$ (see e.g. \cite[Proposition 1]{LR18}).

Next, we let the Banach space
\begin{align*}
C_0(\ZZ_h):=\Big\{f:\ZZ_h\to\RR\;\mbox{ with }\;\lim_{|jh|\to\infty}f(jh)=0\Big\},
\end{align*}
be endowed with the sup norm
\begin{align*}
\|f\|_{C_0(\ZZ_h)}:=\sup_{j\in\ZZ}|f(jh)|.
\end{align*}
We also define $C_0(\RR)$ as
\begin{align*}
C_0(\RR):=\Big\{f:\;f\in C(\RR)\cap L^\infty(\RR)\;\mbox{ and }\;\lim_{|x|\to\infty}f(x)=0\Big\}.\,
\end{align*}
and we endow it with the sup norm.

If $0<\gamma\le 1$ and $k\in \NN\cup\{0\}$, we define the space of H\"older continuous functions
\begin{align*}
C^{\gamma,k}(\RR):=\Big\{U\in C(\RR):\; [U^{(k)}]_{C^{0,\gamma}(\RR)}:=\sup_{x,y\in\RR, x\ne y}\frac{|U^{(k)}(x)-U^{(k)}(y)|}{|x-y|^{\gamma}}<\infty\Big\}.
\end{align*}
For a real number $h>0$, we define the bounded linear map
\begin{align}\label{mapP}
\mathcal R_h: C_0(\RR)\to C_0(\ZZ_h),\; u\mapsto \mathcal R_hu:\ZZ_h \to\RR,\; (\mathcal R_hu)(jh)=u(jh).
\end{align}

Next, let $X$ be a Banach space.
For a fixed real number $0<\alpha\le 1$ and a function $u\in C([0,T);X)$, we define the Caputo time-fractional derivative as follows:
\begin{align}\label{CFD}
\mathbb D_t^\alpha u(t):=\frac{d}{dt}\int_0^t\frac{(t-\tau)^{-\alpha}}{\Gamma(1-\alpha)}\Big(u(\tau)-u(0)\Big)\;d\tau.
\end{align}
We notice that if $u$ is smooth, then \eqref{CFD} is equivalent to the classical definition of $\mathbb D_t^\alpha$ given by
\begin{align*}
\mathbb D_t^\alpha u(t):=\int_0^t\frac{(t-\tau)^{-\alpha}}{\Gamma(1-\alpha)}u^{\prime}(\tau)\;d\tau.
\end{align*}
If $\alpha=1$ and the function $u$ is smooth enough, one can show that $\mathbb D_t^1u(t)=u'(t)$.

The Mittag-Leffler function with two parameters is defined as follows:
\begin{equation*}
E_{\alpha ,\beta }(z):=\sum_{n=0}^{\infty }\frac{z^{n}}{\Gamma (\alpha
n+\beta )},\;\;\alpha >0,\;\beta \in {\mathbb{C}},\quad z\in {\mathbb{C}}.
\end{equation*}
We have that $E_{\alpha
,\beta }(z)$ is an entire function.
In the literature, the notation $E_{\alpha }=E_{\alpha ,1}$ is frequently
used.
It is well-known that, for $0<\alpha <2$:
\begin{equation}
\mathbb{D}_{t}^{\alpha }E_{\alpha ,1}(zt^{\alpha })=zE_{\alpha
,1}(zt^{\alpha }),\quad t>0,z\in {\mathbb{C}},  \label{Der-ML}
\end{equation}%
namely, for every $z\in {\mathbb{C}}$, the function $u(t):=E_{\alpha
,1}(zt^{\alpha })$ is a solution of the scalar valued ordinary differential
equation:
\begin{equation*}
\mathbb{D}_{t}^{\alpha }u(t)=zu(t),\;\;t>0,\;0<\alpha <2.
\end{equation*}%

Finally, for a real number $0<\alpha<1$, we shall denote by $\Phi_\alpha$ the Wright function defined by
\begin{align}\label{wright}
\Phi_\alpha(z):=\sum_{k=0}^\infty\frac{(-z)^k}{\Gamma(\alpha k+1)k!},\;\;\;z\in\CC.
\end{align}
The following formula on the moments is well-known (see e.g. \cite{KLW1,KLW2}):
\begin{align}\label{moment}
\int_0^\infty\Phi_\alpha(t)t^p\;dt=\frac{\Gamma(p+1)}{\Gamma(\alpha p+1)},\;\;\forall\; p>-1.
\end{align}

 For more details on time fractional derivatives, the Mittag-Leffler and the Wright functions, we
refer the reader to \cite{Go-Lu-Ma99,Go-Ma97,Mai,Go-Ma00,Po99} and the
references therein.


\subsection{Statement of the main results} \label{sec-mr-state}

In this section we state the main results of the paper. Recall that, we consider the following semi-linear space-time fractional order Cauchy problems:
\begin{equation}\label{eq31}
\begin{cases}
\mathbb D_t^\alpha u+(-\Delta)^su=f(u)\;\;&\mbox{ in }\;(0,T)\times\RR,\\
u(0,\cdot)=u^0 &\mbox{ in }\;\RR
\end{cases}
\end{equation}
and
\begin{equation}\label{eq32}
\begin{cases}
\mathbb D_t^\alpha u_h+(-\Delta_h)^su_h=f(u_h)\;\;&\mbox{ in }\;(0,T)\times \ZZ_h,\\
u_h(0,\cdot)=u_h^0&\mbox{ in }\;\ZZ_h,
\end{cases}
\end{equation}
where $0<\alpha\le 1$, $0<s<1$ are real numbers and  $\mathbb D_t^\alpha$ denotes the Caputo time-fractional derivative of order $\alpha$ given in \eqref{CFD}.

We introduce our notion of solution.

\begin{definition}
Let $0<s<1$ and $0<\alpha\le 1$.
\begin{enumerate}
\item By a local strong solution of the system \eqref{eq31}, we mean a function $u^\alpha\in C([0,T_{\max});C_0(\RR))$,  for some $T_{\max}>0$, satisfying the following conditions:
\begin{itemize}
\item $\mathbb D_t^\alpha u^\alpha\in  C((0,T_{\max});C_0(\RR))$;

\item  $u^{\alpha}(t,\cdot)\in D((-\Delta)_c^s)$ for a.e. $t\in (0,T_{\max})$;

\item  the first identity in \eqref{eq31} holds poitwise for a.e. $(t,x)\in (0,T_{\max})\times\RR$, and the initial condition is satisfied.
\end{itemize}

\item By a local strong solution of the system \eqref{eq32}, we mean a function $u_h^\alpha\in C([0,T_{\max});C_0(\ZZ_h))$,  for some $T_{\max}>0$, satisfying the following conditions:

\begin{itemize}
\item  $\mathbb D_t^\alpha u_h^\alpha\in C((0,T_{\max});C_0(\ZZ_h))$;

\item  the first identity in  \eqref{eq32} is satisfied for a.e. $t\in (0,T_{\max})$ and all $jh\in\ZZ_h$,  and the initial condition holds.
\end{itemize}
\end{enumerate}
If $T_{\max}=\infty$, then we say that $u^\alpha$ or $u_h^\alpha$ is a global strong solution.
\end{definition}

We assume the following condition on $f$:
\begin{align}\label{cond-f}
f\in C^1(\RR)\;\mbox{ and }\; f(0)=0.
\end{align}

We mention that, it is easy to see that the assumption  \eqref{cond-f} implies that $f$ satisfies the following local-Lipschitz condition:
\begin{equation}\label{eq-LLC}
|f(\xi_1)-f(\xi_2)|\le M|\xi_1-\xi_2|
\end{equation}
for all $\xi_1, \xi_2\in \{\xi\in \RR:\; |\xi|\le M\}$ where $M>0$ is a fixed constant.

Before stating our main results, for the sake of completeness, we include the result on the existence of solutions to the systems \eqref{eq31} and \eqref{eq32}. We notice that this existence result can be found in \cite{Henry} for the case $\alpha=s=1$ and in \cite[Theorem 4.2.2]{Gal-War} for the general case $0<\alpha\le 1$ and $0<s\le 1$. Since this is not the main concern of the present article, we will not go into details.

\begin{theorem}\label{existence}
Let $0<s<1$, $0<\alpha\le 1$ and assume that $f$ satisfies \eqref{cond-f}.
Then the following assertions hold.
\begin{enumerate}
\item For every $u^0\in C_0(\RR)$, there exists  $T_{\max,1}>0$ such that \eqref{eq31} has a unique local strong solution $u^\alpha\in C([0,T_{\max,1});C_0(\RR))$.

\item For every $u_h^0\in C_0(\ZZ_h)$, there exists  $T_{\max,2}>0$ such that \eqref{eq32} has a unique local strong solution $u_h^\alpha\in C([0,T_{\max,2});C_0(\ZZ_h))$.
\end{enumerate}
\end{theorem}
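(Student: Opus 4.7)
The plan is to handle both statements simultaneously by reformulating each Cauchy problem as a fixed-point equation for a mild solution, and then bootstrapping to a strong solution via the regularity of the generators. Let $X$ denote either $C_0(\RR)$ with $A=(-\Delta)_c^s$ or $C_0(\ZZ_h)$ with $A=(-\Delta_h)^s$. The first step is to establish that $-A$ generates a strongly continuous contraction semigroup $(T(t))_{t\ge 0}$ on $X$. For the discrete case this is immediate, since by \eqref{eq:DeltahR} the operator $(-\Delta_h)^s$ is bounded on $C_0(\ZZ_h)$; for the continuous case it is classical that $(-\Delta)^s$ generates the fractional heat semigroup on $C_0(\RR)$ (it is the subordination of the Gaussian semigroup via $\Phi_s$, and it preserves $C_0(\RR)$).

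Next, using the Wright function $\Phi_\alpha$ from \eqref{wright} and the moment identity \eqref{moment}, define the subordinated solution families
\begin{equation*}
S_\alpha(t)\varphi := \int_0^\infty \Phi_\alpha(\sigma)\,T(\sigma t^\alpha)\varphi\,d\sigma,\qquad
P_\alpha(t)\varphi := \alpha\int_0^\infty \sigma\,\Phi_\alpha(\sigma)\,T(\sigma t^\alpha)\varphi\,d\sigma.
\end{equation*}
Standard computations (cf.\ the references \cite{Gal-War,KLW1,KLW2} cited in the excerpt) show that $S_\alpha,P_\alpha$ are uniformly bounded on $[0,\infty)$ by constants depending only on $\alpha$, strongly continuous in $t$, and satisfy the variation-of-parameters formula making
\begin{equation*}
u(t) = S_\alpha(t)u^0 + \int_0^t (t-\tau)^{\alpha-1} P_\alpha(t-\tau) f(u(\tau))\,d\tau
\end{equation*}
the natural notion of a mild solution in $C([0,T];X)$; when $\alpha=1$ this collapses to the usual Duhamel formula.

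To obtain a local mild solution I would apply the Banach contraction principle on the closed ball
\begin{equation*}
B_{R,T} := \bigl\{v\in C([0,T];X) : \|v(t)-S_\alpha(t)u^0\|_X\le R \text{ for all } t\in[0,T]\bigr\}
\end{equation*}
with $R$ chosen small enough that all elements satisfy $\|v(t)\|_X\le M$ for some fixed $M>0$, so that \eqref{eq-LLC} applies. The map
$\Psi(v)(t) := S_\alpha(t)u^0 + \int_0^t (t-\tau)^{\alpha-1} P_\alpha(t-\tau) f(v(\tau))\,d\tau$
then satisfies
$\|\Psi(v)-\Psi(w)\|_{C([0,T];X)} \le C_\alpha M \tfrac{T^\alpha}{\alpha}\|v-w\|_{C([0,T];X)}$,
and a similar estimate shows $\Psi$ maps $B_{R,T}$ into itself, provided $T=T_{\max}$ is chosen sufficiently small. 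This gives a unique fixed point $u^\alpha\in C([0,T_{\max});X)$, and the usual continuation argument yields a maximal interval of existence.

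The main (and only nontrivial) remaining point is upgrading the mild solution to a strong one in the sense of the definition above, i.e.\ verifying that $\mathbb D_t^\alpha u^\alpha\in C((0,T_{\max});X)$, that $u^\alpha(t,\cdot)\in D(A)$ pointwise a.e., and that the equation holds pointwise. For the discrete problem this is automatic because $(-\Delta_h)^s$ is bounded, so the mild solution is in fact classical and one can differentiate \eqref{CFD} directly. For the continuous problem one invokes the smoothing properties of $S_\alpha$ and $P_\alpha$ (mapping $X$ into $D((-\Delta)_c^s)$ for $t>0$) together with the regularity of $f\circ u^\alpha$ from $f\in C^1(\RR)$ to justify applying $\mathbb D_t^\alpha$ under the integral sign; this is where the care required in \cite[Theorem 4.2.2]{Gal-War} enters, and is the step I expect to be the most technical. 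Once the strong form is verified, uniqueness of the strong solution follows from uniqueness of the fixed point.
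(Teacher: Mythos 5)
The paper does not actually prove Theorem \ref{existence}: it explicitly defers to \cite{Henry} for $\alpha=s=1$ and to \cite[Theorem 4.2.2]{Gal-War} for the general case, stating that "since this is not the main concern of the present article, we will not go into details." Your proposal therefore supplies an argument where the paper supplies only a citation, and the argument you sketch --- mild formulation via the subordinated families $S_\alpha$, $P_\alpha$ built from the Wright function, Banach fixed point on a ball where the local Lipschitz bound \eqref{eq-LLC} is in force, continuation to a maximal interval, and then bootstrapping to a strong solution --- is exactly the standard route that the cited references take, and it is consistent with the representation formulas \eqref{u}--\eqref{PP1} that the paper later uses in Step 4 of the proof of Theorem \ref{main-theorem1}. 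Two points deserve slightly more care than your sketch gives them. First, in the continuous case the upgrade from mild to strong solution requires not only that $T(t)$ (hence $S_\alpha(t)$, $P_\alpha(t)$) map $C_0(\RR)$ into $D((-\Delta)_c^s)$ for $t>0$, but also that $t\mapsto f(u^\alpha(t))$ be H\"older continuous in time on compact subsets of $(0,T_{\max})$; this follows from the analyticity of the fractional heat semigroup (temporal H\"older regularity of the mild solution away from $t=0$) combined with $f\in C^1$, and is the genuinely technical step you correctly identify but do not carry out. Second, your self-mapping estimate on $B_{R,T}$ implicitly uses $\sup_t\|S_\alpha(t)u^0\|_X\le\|u^0\|_X$ so that $\|v(t)\|_X\le\|u^0\|_X+R=:M$; you should fix $M$ this way before invoking \eqref{eq-LLC}, since the Lipschitz constant there depends on $M$. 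With these caveats the proposal is a correct outline of the proof the paper chose to omit.
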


The following theorem  is the main result of the paper.

\begin{theorem} \label{main-theorem1}
Let $0<s<1$, $0<\alpha\le 1$ be real numbers and assume that $f$ satisfies \eqref{eq-LLC}.
Let $u_h^0\in C_0(\ZZ_h)$ and let $u_h^\alpha$ be the associated unique local strong solution of  \eqref{eq32} on $(0,T_{\max,2})$. Then, there exists $u^0\in C_0(\RR)$ such that the corresponding unique local strong solution $u^\alpha$ on $(0,T_{\max,1})$ of \eqref{eq31} satisfies
\begin{align}\label{eq-inh}
\lim_{h\downarrow 0}\sup_{0\le t\le \mathcal T}\|\mathcal R_hu^\alpha(t)-u_h^\alpha(t)\|_{C_0(\ZZ_h)}=0,
\end{align}
for all $\mathcal T\in [0,T_{\max})$, where $T_{\max}:=\min\{T_{\max,1},T_{\max,2}\}$ and $\mathcal R_h$ is the operator defined in \eqref{mapP}.
\end{theorem}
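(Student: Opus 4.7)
My plan is to reduce the convergence of the nonlinear problems to the convergence of the underlying linear semigroups, lift the latter from the stationary Theorem~\ref{thm-Stinga} via the Trotter--Kato theorem in spaces connected by $\mathcal R_h$, subordinate through the Wright function to the Mittag--Leffler solution families, and finally close a Gronwall-type estimate for the difference of mild solutions. Writing $A:=(-\Delta)_c^s$ on $C_0(\RR)$ and $A_h:=(-\Delta_h)^s$ on $C_0(\ZZ_h)$, both problems admit the mild representation
\[
u^\alpha(t) = S_\alpha(t)\, u^0 + \int_0^t (t-\tau)^{\alpha-1} T_\alpha(t-\tau)\, f(u^\alpha(\tau))\, d\tau,
\]
and analogously for $u_h^\alpha$, where via the Wright function of \eqref{wright},
\[
S_\alpha(t) = \int_0^\infty \Phi_\alpha(\sigma)\, e^{-t^\alpha \sigma A}\, d\sigma, \qquad T_\alpha(t) = \alpha \int_0^\infty \sigma\, \Phi_\alpha(\sigma)\, e^{-t^\alpha \sigma A}\, d\sigma,
\]
and likewise with $A$ replaced by $A_h$. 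For $\alpha=1$ these collapse to $e^{-tA}$ and the argument below is unchanged.

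\textbf{Linear convergence via Trotter--Kato and subordination.} Applying Theorem~\ref{thm-Stinga} to the resolvent equations $(\lambda+A)v=g$ and $(\lambda+A_h)v_h=\mathcal R_h g$ at a fixed $\lambda>0$ produces, for each $g$ in a dense subset of $C_0(\RR)$,
\[
\bigl\| \mathcal R_h (\lambda+A)^{-1} g - (\lambda+A_h)^{-1} \mathcal R_h g \bigr\|_{C_0(\ZZ_h)} \longrightarrow 0 \qquad (h\downarrow 0).
\]
The discrete semigroups $\{e^{-tA_h}\}_h$ are equicontractive on $C_0(\ZZ_h)$, since each $A_h$ is the fractional power of a submarkovian generator. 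The Trotter--Kato approximation theorem for semigroups acting on different Banach spaces connected by the maps $\mathcal R_h$ then yields
\[
\sup_{0\le t\le \mathcal T} \bigl\| \mathcal R_h e^{-tA} g - e^{-tA_h} \mathcal R_h g \bigr\|_{C_0(\ZZ_h)} \longrightarrow 0 \qquad (h\downarrow 0)
\]
for every $g\in C_0(\RR)$ and every $\mathcal T>0$. Using $\Phi_\alpha\ge 0$, the moment identity \eqref{moment} with $p=0,1$, and dominated convergence under the subordination integrals, this uniform convergence transfers to the Mittag--Leffler families $S_\alpha$ and $T_\alpha$.

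\textbf{Nonlinear closure, data, and main obstacle.} Given the family $\{u_h^0\}_h$, I take $u^0\in C_0(\RR)$ to be any fixed extension (for instance, a piecewise linear interpolation) that is uniformly bounded in $h$ and satisfies $\|\mathcal R_h u^0 - u_h^0\|_{C_0(\ZZ_h)}\to 0$; when $u_h^0=\mathcal R_h w$, the choice $u^0=w$ works directly. Fix $\mathcal T\in[0,T_{\max})$, and set $M:=1+\sup_{h,\,0\le t\le\mathcal T}\bigl(\|u^\alpha(t)\|_{C_0(\RR)}+\|u_h^\alpha(t)\|_{C_0(\ZZ_h)}\bigr)$, which is finite by the a~priori control supplied by Theorem~\ref{existence}. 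The local Lipschitz bound \eqref{eq-LLC} at level $M$, together with the identity $\mathcal R_h f(u^\alpha(\tau))=f(\mathcal R_h u^\alpha(\tau))$, leads to
\[
e_h(t) \le \varepsilon_h(\mathcal T) + CM \int_0^t (t-\tau)^{\alpha-1}\, e_h(\tau)\, d\tau,
\]
where $e_h(t):=\|\mathcal R_h u^\alpha(t)-u_h^\alpha(t)\|_{C_0(\ZZ_h)}$ and $\varepsilon_h(\mathcal T)\to 0$ by the previous step. A generalized singular-kernel Gronwall inequality (whose envelope is controlled by $E_\alpha$) then yields $\sup_{0\le t\le\mathcal T} e_h(t)\to 0$, which is \eqref{eq-inh}. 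The delicate point is the Trotter--Kato step: one must verify that the stationary convergence of Theorem~\ref{thm-Stinga} upgrades to resolvent convergence of the generators at a fixed $\lambda>0$ on a dense subset of $C_0(\RR)$, and that $A$ and $A_h$ generate equicontractive $C_0$-semigroups on their respective spaces; once this functional-analytic framework is in place, the subordination and Gronwall steps are routine.
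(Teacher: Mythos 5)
Your proposal follows essentially the same route as the paper: stationary convergence (Theorem~\ref{thm-Stinga}) fed into a Trotter--Kato theorem for spaces linked by $\mathcal R_h$, subordination through the Wright function to the families $\mathbb S^\alpha,\mathbb P^\alpha$, and a singular Gronwall/Mittag--Leffler closure of the mild-solution difference. The only cosmetic difference is that you verify the Trotter--Kato hypothesis via resolvent convergence, whereas the paper applies its stated version (Theorem~\ref{thm-Kato}) directly in the generator-on-a-core form, taking $f_h=\mathcal R_h u_0$ for $u_0\in D((-\Delta)_c^s)\cap C_c^{1,1}(\RR)$ so that Theorem~\ref{thm-Stinga} gives the required generator convergence at once.
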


We conclude this section with the following remark.
\begin{remark}
{\em Regarding the convergence results in Theorem \ref{main-theorem1}, unfortunately we do not know analytically the rate of convergence. But our simulations results obtained in Section \ref{numerics} clearly show that we have a rate of convergence which depends on the parameter $s$.
}
\end{remark}

\section{Preliminaries}\label{sec-pre}

In this section we give some preliminary results that are needed in the proofs of our main results.

\subsection{A version of the Trotter-Kato approximation theorem}

\begin{definition}\label{def-21}
A sequence of Banach spaces $X_n$ ($n\in\NN$) together with a sequence of  bounded linear maps $\Pi_n:X\to X_n$ is said to approximate the Banach space $X$ if
\begin{align*}
\lim_{n\to\infty}\|\Pi_nf\|_{X_n}=\|f\|_X\;\mbox{ for all }\; f\in X.
\end{align*}
\end{definition}

We have the following approximation result whose proof can be found in \cite[Theorem 2.6]{Rosen} (see also \cite{Kur}).

\begin{theorem}\label{thm-Kato}
Suppose $\{X_n\}_{n\ge 1}$ approximates the Banach space $X$ and $T_n(t):X_n\to X_n$, $T(t):X\to X$ are strongly continuous contraction semigroups with generators $A_n$ and $A$, respectively. If for every $f\in D(A)$ (or a core of $A$) there exists a sequence $\{f_n\}_{n\ge 1}\subset D(A_n)$  with the properties
\begin{align}
\lim_{n\to\infty}\|\Pi_nf-f_n\|_{X_n}=0\;\mbox{ and }\; \lim_{n\to\infty}\|\Pi_nAf-A_nf_n\|_{X_n}=0,
\end{align}
then for every $f\in X$ we have
\begin{align}
\lim_{n\to\infty}\sup_{0\le t\le S}\|\Pi_nT(t )f-T_n(t)\Pi_nf\|_{X_n}=0,
\end{align}
for every $S\in [0,\infty)$.
\end{theorem}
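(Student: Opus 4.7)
The plan is to convert the given convergence of generators on a core into convergence of resolvents, and then to convert resolvent convergence into semigroup convergence via the Yosida approximation. This is the classical route to Trotter--Kato theorems.

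I would begin with a density reduction. Since $\|\Pi_n f\|_{X_n}\to\|f\|_X$ for every $f\in X$, the Uniform Boundedness Principle yields $M:=\sup_n\|\Pi_n\|_{X\to X_n}<\infty$. Combined with the contractivity of $T(t)$ and $T_n(t)$, the triangle inequality gives, for any $f\in X$ and any $f'\in D(A)$,
\[
\sup_{0\le t\le S}\|\Pi_n T(t)f-T_n(t)\Pi_n f\|_{X_n}
\;\le\;2M\|f-f'\|_X+\sup_{0\le t\le S}\|\Pi_n T(t)f'-T_n(t)\Pi_n f'\|_{X_n}.
\]
Hence it suffices to prove the conclusion for $f$ in a dense subspace of $X$, and $D(A)$ (or the given core) will do.

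The central step is convergence of resolvents: for every $\lambda>0$ and every $g\in X$,
\[
\bigl\|\Pi_n R(\lambda,A)g-R(\lambda,A_n)\Pi_n g\bigr\|_{X_n}\longrightarrow 0.
\]
To establish this, set $f:=R(\lambda,A)g\in D(A)$, so $\lambda f-Af=g$. Pick the sequence $f_n\in D(A_n)$ furnished by the hypothesis and put $g_n:=\lambda f_n-A_n f_n\in X_n$. Then $\|g_n-\Pi_n g\|_{X_n}\le\lambda\|\Pi_n f-f_n\|_{X_n}+\|\Pi_n Af-A_n f_n\|_{X_n}\to 0$, while $R(\lambda,A_n)g_n=f_n$ and $\|R(\lambda,A_n)\|\le 1/\lambda$. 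Consequently,
\[
\bigl\|\Pi_n R(\lambda,A)g-R(\lambda,A_n)\Pi_n g\bigr\|_{X_n}
\;\le\;\|\Pi_n f-f_n\|_{X_n}+\lambda^{-1}\|g_n-\Pi_n g\|_{X_n}\longrightarrow 0.
\]
If the hypothesis is only given on a core $D\subsetneq D(A)$, the set $(\lambda-A)(D)$ is dense in $X$, and one extends from this dense set to all of $X$ using the uniform bounds $\|R(\lambda,A_n)\|\le 1/\lambda$ and $\sup_n\|\Pi_n\|\le M$.

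To pass from resolvents to semigroups, I would invoke the Yosida approximations $A^{(\lambda)}:=\lambda^2 R(\lambda,A)-\lambda I$ and $A_n^{(\lambda)}:=\lambda^2 R(\lambda,A_n)-\lambda I$. These are bounded, and the standard computation $\|e^{tA_n^{(\lambda)}}\|\le e^{-t\lambda}e^{t\lambda^2\|R(\lambda,A_n)\|}\le 1$ shows their exponentials are contractions, \emph{uniformly in $n$}. The Hille--Yosida theorem then gives $e^{tA^{(\lambda)}}f\to T(t)f$ in $X$ and $e^{tA_n^{(\lambda)}}h\to T_n(t)h$ in $X_n$ as $\lambda\to\infty$, uniformly for $t$ in compacts and, crucially, uniformly in $n$ (because the Hille--Yosida estimates depend only on the common contraction bound). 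For each fixed $\lambda$, resolvent convergence and the norm-convergent power series of $e^{tA_n^{(\lambda)}}$ yield
\[
\bigl\|e^{tA_n^{(\lambda)}}\Pi_n f-\Pi_n e^{tA^{(\lambda)}}f\bigr\|_{X_n}\longrightarrow 0,\qquad\text{uniformly in }t\in[0,S].
\]
A three-$\varepsilon$ argument -- fix $\lambda$ large so that both Yosida errors $\|e^{tA^{(\lambda)}}f-T(t)f\|_X$ and $\|e^{tA_n^{(\lambda)}}\Pi_n f-T_n(t)\Pi_n f\|_{X_n}$ are below $\varepsilon/3$ uniformly in $n\in\mathbb N$ and $t\in[0,S]$, then send $n\to\infty$ -- closes the argument.

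The main obstacle is the interchange of the limits $\lambda\to\infty$ and $n\to\infty$ in this last step. The whole construction succeeds only because the contraction hypothesis makes the Hille--Yosida rate of $e^{tA_n^{(\lambda)}}\to T_n(t)$ independent of $n$; without that uniformity the three-$\varepsilon$ argument collapses. Everything else -- the resolvent identity, the telescoping of resolvent errors through the power series, and the density reduction -- is bookkeeping with contractions and the approximating maps $\Pi_n$.
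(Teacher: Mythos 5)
The paper does not actually prove Theorem \ref{thm-Kato}: it is quoted from the literature (Rosen's Theorem 2.6, and Kurtz), so there is no in-paper argument to compare against, and your proposal must stand on its own. Its architecture is the classical Trotter--Kato route, and most of it is correct: the uniform boundedness of the $\Pi_n$, the $2M\|f-f'\|_X$ density reduction, and the derivation of resolvent convergence $\|\Pi_nR(\lambda,A)g-R(\lambda,A_n)\Pi_ng\|_{X_n}\to 0$ from the generator hypothesis via $g_n:=\lambda f_n-A_nf_n$ are all sound, as is the telescoping of resolvent errors through the power series of $e^{tA_n^{(\lambda)}}$ for fixed $\lambda$.

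The gap is exactly at the point you flag as ``the main obstacle,'' and the reason you give for overcoming it is not valid. The quantitative Hille--Yosida estimate is $\|e^{tA_n^{(\lambda)}}x-T_n(t)x\|_{X_n}\le t\,\|A_n^{(\lambda)}x-A_nx\|_{X_n}$ for $x\in D(A_n)$; the right-hand side is \emph{not} controlled by the common contraction bound. It measures how fast $\lambda R(\lambda,A_n)y\to y$ at $y=A_nx$, and that rate depends on the modulus of continuity of $T_n$ at $y$ (equivalently, on quantities like $\|A_n^2x\|$ when they exist), which may degenerate as $n\to\infty$. Moreover $\Pi_nf$ need not lie in $D(A_n)$ at all, so the estimate cannot even be applied to the vector you use. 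Hence the claim that $\lambda$ can be chosen so that $\|e^{tA_n^{(\lambda)}}\Pi_nf-T_n(t)\Pi_nf\|_{X_n}<\varepsilon/3$ uniformly in $n$ is unjustified, and the three-$\varepsilon$ argument does not close as written. The repair is to run the estimate at $x=f_n$ and route it back through the hypotheses: writing
\begin{align*}
A_n^{(\lambda)}f_n-A_nf_n=\lambda R(\lambda,A_n)\bigl(A_nf_n-\Pi_nAf\bigr)+\bigl(\lambda R(\lambda,A_n)\Pi_n-\lambda\Pi_nR(\lambda,A)\bigr)Af+\Pi_n\bigl(\lambda R(\lambda,A)Af-Af\bigr)+\bigl(\Pi_nAf-A_nf_n\bigr),
\end{align*}
the first, second and fourth terms tend to $0$ as $n\to\infty$ for each fixed $\lambda$ (by the generator hypothesis and by resolvent convergence applied to the \emph{fixed} vector $Af$), while the third is bounded by $M\|\lambda R(\lambda,A)Af-Af\|_X$, which is independent of $n$ and tends to $0$ as $\lambda\to\infty$. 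This yields $\limsup_{n}\sup_{0\le t\le S}\|e^{tA_n^{(\lambda)}}f_n-T_n(t)f_n\|_{X_n}\le SM\|\lambda R(\lambda,A)Af-Af\|_X$, after which replacing $f_n$ by $\Pi_nf$ costs $2\|f_n-\Pi_nf\|_{X_n}\to 0$ and the three-$\varepsilon$ argument does close. Alternatively, one can bypass the Yosida approximants entirely via the identity $T_n(t)R(\lambda,A_n)\Pi_ng-R(\lambda,A_n)\Pi_nT(t)g=\int_0^tT_n(t-s)\bigl[A_nR(\lambda,A_n)\Pi_n-R(\lambda,A_n)\Pi_nA\bigr]T(s)g\,ds$, which reduces everything to resolvent convergence on the compact orbit $\{T(s)g,\,T(s)Ag:0\le s\le S\}$.
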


\subsection{The continuous and discrete fractional Laplace operators}

Let $0<s<1$ and let  $(-\Delta)^s$ be the fractional Laplace operator defined  in \eqref{FL}.

We also let $(-\Delta)_2^s$ be the selfadjoint  operator on $L^2(\RR)$  with domain
\begin{align*}
D((-\Delta)_2^s):=\{u\in H^s(\RR):\; (-\Delta)^su\in L^2(\RR)\},
\end{align*}
where the fractional order Sobolev space $H^s(\RR)$ is defined by
\begin{align*}
H^s(\RR):=\Big\{u\in L^2(\RR):\;\int_{\RR}\int_{\RR}\frac{|u(x)-u(y)|^2}{|x-y|^{1+2s}}\;dxdy<\infty\Big\}.
\end{align*}
Then, it is nowadays well-known that the operator $A:=-(-\Delta)_2^s$ generates a submarkovian semigroup $(e^{-t(-\Delta)_2^s})_{t\ge 0}$ in $L^2(\RR)$.

Let $(-\Delta)_c^s$ be the part of $(-\Delta)_2^s$ on $C_0(\RR)$. That is,
\begin{align*}
\begin{cases}
D((-\Delta)_c^s):=\Big\{U\in C_0(\RR)\cap D((-\Delta)_2^s):\; (-\Delta)^sU\in C_0(\RR)\Big\},\\
 (-\Delta)_c^sU:=(-\Delta)_2^sU.
 \end{cases}
\end{align*}

We have the following result. We include the proof for the sake of completeness.

\begin{proposition}\label{prop-23}
The operator $-(-\Delta)_c^s$ generates a strongly continuous semigroup  of contractions  $(e^{-t(-\Delta)_c^s})_{t\ge 0}$ on $C_0(\RR)$.
\end{proposition}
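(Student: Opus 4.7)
The plan is to construct a $C_0$-contraction semigroup on $C_0(\RR)$ by Bochner subordination from the Gaussian heat semigroup, and then to identify its generator with $-(-\Delta)_c^s$. First I would recall that the classical heat semigroup $(e^{t\Delta})_{t \ge 0}$ on $C_0(\RR)$, acting by convolution with $G_\tau(x) = (4\pi\tau)^{-1/2}e^{-x^2/(4\tau)}$, is a strongly continuous contraction semigroup. Letting $\eta_t^s$ denote the density of the $s$-stable subordinator (so that $\int_0^\infty e^{-\lambda\tau}\eta_t^s(\tau)\,d\tau = e^{-t\lambda^s}$ for $\lambda \ge 0$), I would define
\begin{equation*}
S(t) u := \int_0^\infty \eta_t^s(\tau)\, e^{\tau\Delta} u\, d\tau = p_t^s * u, \qquad u \in C_0(\RR),\; t>0,
\end{equation*}
with $S(0):=I$, where $p_t^s := \int_0^\infty \eta_t^s(\tau) G_\tau\, d\tau$ is the symmetric $s$-stable density on $\RR$. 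This is precisely the convolution kernel of $(e^{-t(-\Delta)_2^s})$ on $L^2(\RR)$.

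The next step is to verify that $(S(t))_{t\ge 0}$ is a $C_0$-contraction semigroup on $C_0(\RR)$. Since $\eta_t^s \ge 0$ with $\|\eta_t^s\|_{L^1(0,\infty)}=1$ and $\tau\mapsto e^{\tau\Delta}u$ is bounded and continuous into $C_0(\RR)$, the integral converges as a $C_0(\RR)$-valued Bochner integral and satisfies $\|S(t)u\|_{C_0(\RR)}\le \|u\|_{C_0(\RR)}$. The convolution semigroup law $\eta_{t+r}^s = \eta_t^s * \eta_r^s$ on $[0,\infty)$ translates via Fubini into $S(t+r)=S(t)S(r)$. Strong continuity at $t=0$ follows because $\eta_t^s(\tau)\,d\tau$ concentrates at $\tau=0$ as $t\downarrow 0$ while $\tau\mapsto e^{\tau\Delta}u$ is continuous at $\tau=0$ with value $u$; this uses that every $u\in C_0(\RR)$ is uniformly continuous. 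Write $B$ for the generator of $(S(t))$.

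To identify $B$ with $-(-\Delta)_c^s$, I would argue two inclusions. For $-(-\Delta)_c^s \subset B$, take $u \in D((-\Delta)_c^s) \subset L^2(\RR)\cap C_0(\RR)$; the identity $S(t)u - u = -\int_0^t S(r)(-\Delta)_c^s u\,dr$ holds in $L^2(\RR)$ and lifts to a $C_0(\RR)$-valued Bochner identity since both sides lie in $C_0(\RR)$, so dividing by $t$ and letting $t\downarrow 0$ gives $u\in D(B)$ with $Bu=-(-\Delta)_c^s u$. For the reverse inclusion $B \subset -(-\Delta)_c^s$, I would exhibit the Schwartz class $\mathcal{S}(\RR)$ as a core for $B$: it is dense in $C_0(\RR)$, contained in $D((-\Delta)_c^s)$, and invariant under $(S(t))$, and on it $B$ coincides with $-(-\Delta)^s$ from \eqref{FL} by Balakrishnan's formula together with a direct Fourier computation. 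Since $-(-\Delta)_c^s$ is closed, this yields equality. The hardest step will be this core property; although standard for Schwartz approximation of $C_0$-data, it requires that mollification-plus-truncation be carried out in a way compatible with the pointwise singular integral in \eqref{FL}, and is the only genuinely delicate point in the plan.
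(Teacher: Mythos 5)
Your subordination strategy is a genuinely different route from the paper's. The paper never constructs the semigroup explicitly: it shows via elliptic regularity that weak solutions of $\lambda U+(-\Delta)^sU=F$ with $F\in C_0(\RR)$ lie in $C^{0,s}(\RR)\cap C_0(\RR)$, so the resolvent $(\lambda+(-\Delta)_2^s)^{-1}$ leaves $C_0(\RR)$ invariant, hence so does $e^{-t(-\Delta)_2^s}$, and strong continuity follows from density of the domain. Your construction of $(S(t))_{t\ge 0}$ as a $C_0$-contraction semigroup (positivity and mass one of $\eta_t^s$, the convolution semigroup law, concentration at $\tau=0$) is sound, and so is the inclusion $-(-\Delta)_c^s\subset B$: the $L^2$-identity $S(t)u-u=-\int_0^t S(r)(-\Delta)_c^su\,dr$ does lift to $C_0(\RR)$ because both sides are continuous functions agreeing a.e.

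The gap is exactly at the point you flagged, the core argument, and it is not merely delicate but false as stated: $\mathcal{S}(\RR)$ is \emph{not} invariant under $S(t)$. Indeed $\widehat{S(t)u}=e^{-t|\xi|^{2s}}\hat u$ fails to be smooth at $\xi=0$; equivalently the $2s$-stable kernel $p_t^s$ has heavy tails $p_t^s(x)\sim c_s\,t\,|x|^{-1-2s}$, so $S(t)u$ decays only polynomially even for $u\in C_c^\infty(\RR)$. Hence the standard ``dense and semigroup-invariant implies core'' theorem does not apply. The core property of $\mathcal{S}(\RR)$ can be rescued by the range criterion instead — one shows $(\lambda+(-\Delta)^s)\mathcal{S}(\RR)$ is dense in $C_0(\RR)$, e.g.\ because functions whose Fourier transform lies in $C_c^\infty(\RR\setminus\{0\})$ are sup-norm dense — but that is a different argument which you would need to supply. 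A second, related problem: the inclusion $B\subset-(-\Delta)_c^s$ and the claimed closedness of $-(-\Delta)_c^s$ cannot both hold with the paper's definition, since $D((-\Delta)_c^s)\subset D((-\Delta)_2^s)\subset L^2(\RR)$ while $D(B)=(\lambda-B)^{-1}C_0(\RR)$ contains non-square-integrable functions (take $f\in C_0(\RR)\setminus L^2(\RR)$). At best $B$ is the closure in $C_0(\RR)$ of the part operator; this imprecision is already latent in the statement and in the paper's own proof, but your two-inclusion program makes it load-bearing, so you should either enlarge the domain of $(-\Delta)_c^s$ or weaken the conclusion to ``$B$ extends $-(-\Delta)_c^s$ and $D((-\Delta)_c^s)$ contains a core of $B$.''
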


\begin{proof}
  Let $F\in L^2(\RR)$,  $\lambda> 0$ a real number, and consider the following Poisson problem:
 \begin{align}\label{PB}
 \lambda U+(-\Delta)^sU=F\;\mbox{ in }\;\RR.
\end{align}
 By a weak solution of \eqref{PB}, we mean a function $U\in H^s(\RR)$ such that the identity
 \begin{align*}
 \lambda\int_{\RR}UV\;dx+\frac{C_s}{2}\int_{\RR}\int_{\RR}\frac{(U(x)-U(y))(V(x)-V(y))}{|x-y|^{1+2s}}\;dxdy=\int_{\RR}FV\;dx,
 \end{align*}
holds for every $V\in H^s(\RR)$.
Firstly, we notice that for every $F\in L^2(\RR)$, the problem \eqref{PB} has a unique weak solution. In addition, if $F\in C_0(\RR)$, then $U\in C^{0,s}(\RR)\cap C_0(\RR)$ (see e.g. \cite{Grub,RS-DP2} and their references). This shows that the resolvent $(\lambda+(-\Delta)^s)^{-1}$ leaves the space $C_0(\RR)$ invariant. Using semigroups theory, the above property implies that the operator $e^{-t(-\Delta)_2^s}$ also leaves the space $C_0(\RR)$ invariant for every $t\ge 0$. Thus, $-(-\Delta)_c^s$ generates a semigroup of contractions on $C_0(\RR)$. The strong continuity of the semigroup follows from the fact that $D((-\Delta)_c^s)$ is dense in $C_0(\RR)$ and the proof is finished.
\end{proof}

Next, recall that we have defined
\begin{align*}
C_0(\ZZ_h):=\Big\{f:\ZZ_h\to\RR\;\mbox{ with }\;\lim_{|jh|\to\infty}f(jh)=0\Big\}.
\end{align*}
Let  $(-\Delta_h)^s$ be the operator defined in \eqref{dFL}.
Then, we have following result.

\begin{proposition}\label{Prop-SG-h}
The bounded operator $A_h:=-(-\Delta_h)^s$ (that is, with domain $D(A_h)=C_0(\ZZ_h)$)
generates a uniformly strongly continuous semigroup of contractions $T_h=(e^{-t(-\Delta_h)^s})_{t\ge 0}$ on $C_0(\ZZ_h)$ and is given explicitly for $f\in C_0(\ZZ_h)$ by
\begin{equation}\label{SG-h}
T_h(t)f(jh) =\sum_{n\in\ZZ} L^s_{n-j}(t/h^{2s}) f(nh),
\end{equation}
where 
$$
L^s_n(t) := \frac{1}{2\pi} \int_{-\pi}^{\pi} e^{-t(4\sin^2 \theta/2)^s} e^{-in\theta} d\theta = \int_0^{\infty} e^{-2\lambda}I_n(2\lambda) f_{x,s}(\lambda)d\lambda,
$$
being $f_{x,s}(\lambda)$ the L\'evy function.
\end{proposition}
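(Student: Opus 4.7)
The plan is to reduce the proposition to three manageable steps: establishing that $A_h$ is a bounded operator on $C_0(\ZZ_h)$ (so that it automatically generates a uniformly continuous semigroup by elementary semigroup theory), verifying contractivity via Bochner subordination against the already-known submarkovian semigroup $(e^{t\Delta_h})_{t\ge 0}$, and finally computing the kernel explicitly through Fourier analysis on the mesh.

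For the first step I would use the pointwise representation \eqref{eq:DeltahR}. Together with the asymptotic $\Gamma(|m|-s)/\Gamma(|m|+1+s) \sim |m|^{-1-2s}$ as $|m|\to\infty$, this shows that $\sum_{m\ne 0} R_h^s(m) < \infty$ for every fixed $h>0$, so $(-\Delta_h)^s$ is a bounded operator on $\ell^\infty(\ZZ_h)$; a standard truncation using the decay of $R_h^s$ shows that it leaves $C_0(\ZZ_h)$ invariant. Hence $A_h=-(-\Delta_h)^s\in\mathcal L(C_0(\ZZ_h))$ generates the uniformly continuous semigroup $T_h(t)=\sum_{k\ge 0}(tA_h)^k/k!$. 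For contractivity, I would start from the definition \eqref{dFL} and recognize it as the Bochner subordination of $(e^{t\Delta_h})_{t\ge 0}$ with respect to the one-sided $s$-stable Lévy density $f_{t,s}$, giving
\begin{equation*}
T_h(t)f = \int_0^\infty e^{\sigma\Delta_h} f\,\cdot\, f_{t,s}(\sigma)\,d\sigma, \qquad \int_0^\infty f_{t,s}(\sigma)\,d\sigma = 1.
\end{equation*}
Since $(e^{t\Delta_h})_{t\ge 0}$ is submarkovian (hence a contraction) on $C_0(\ZZ_h)$ by \eqref{SG}, the above convex combination is itself a contraction, proving the semigroup is a $C_0$-semigroup of contractions.

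To obtain the explicit formula, I would work on the Fourier side via the semidiscrete transform $\hat f(\theta)=\sum_n f(nh) e^{in\theta}$. The symbol of $-\Delta_h$ is $4h^{-2}\sin^2(\theta/2)$, so by the spectral/functional calculus implicit in \eqref{dFL} the symbol of $(-\Delta_h)^s$ is $h^{-2s}(4\sin^2(\theta/2))^s$, and consequently $T_h(t)$ is convolution with the kernel
\begin{equation*}
L_n^s(t/h^{2s}) = \frac{1}{2\pi}\int_{-\pi}^{\pi} e^{-t h^{-2s}(4\sin^2(\theta/2))^s}\, e^{-in\theta}\,d\theta,
\end{equation*}
which is the first expression in \eqref{SG-h}. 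To obtain the second (Bessel) representation, I would plug the subordination identity $e^{-\tau(4\sin^2(\theta/2))^s} = \int_0^\infty e^{-\lambda\cdot 4\sin^2(\theta/2)} f_{\tau,s}(\lambda)\,d\lambda$ into the Fourier integral, swap integrals, and recognize the inner Fourier coefficient as $e^{-2\lambda} I_n(2\lambda)$ (the known generating function of the modified Bessel functions $\sum_n I_n(z)e^{in\theta}=e^{z\cos\theta}$).

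The main technical obstacle is justifying the Fourier manipulations in the $C_0(\ZZ_h)$ setting, where elements need not lie in $\ell^2(\ZZ_h)$; I would circumvent this by first proving the kernel identity on the dense subspace of finitely supported sequences, where all sums and integrals converge absolutely, and then extending to $C_0(\ZZ_h)$ by the boundedness of $T_h(t)$ established in the first step. A secondary bookkeeping issue is the scaling between $t$ and $h$: the change of variable $\sigma = h^2\lambda$ in the subordination integral is responsible for the appearance of $t/h^{2s}$ rather than $t$ in the argument of $L_n^s$, and one must track this scaling carefully through the Lévy density's self-similarity.
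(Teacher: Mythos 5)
Your argument is correct, but it is genuinely different in character from what the paper does: the paper offers no proof at all, simply deferring to \cite[Theorem 1.3]{LR18} and then, in the subsequent remark, sketching only the contractivity via positivity of the Bessel and L\'evy functions. What you have written is essentially a self-contained reconstruction of the cited theorem. Your three-step decomposition is sound: boundedness of $(-\Delta_h)^s$ on $C_0(\ZZ_h)$ follows from $\Gamma(|m|-s)/\Gamma(|m|+1+s)\sim |m|^{-1-2s}$, hence $\ell^1$-summability of the kernel for fixed $h$, and a bounded generator yields norm (uniform) continuity of the semigroup for free; contractivity via Bochner subordination of the submarkovian semigroup \eqref{SG} against the one-sided $s$-stable density is exactly the mechanism the paper's remark alludes to (note $\sum_{n}e^{-2\lambda}I_n(2\lambda)=1$ also gives $\sum_n L^s_n(\tau)=1$ with $L^s_n\ge 0$, which makes the convolution in \eqref{SG-h} absolutely convergent for merely bounded $f$ and re-proves the contraction bound directly from the kernel); and the Fourier computation with symbol $4h^{-2}\sin^2(\theta/2)$ together with the generating function $\sum_n I_n(z)e^{in\theta}=e^{z\cos\theta}$ correctly produces both expressions for $L^s_n$ and the $t/h^{2s}$ scaling. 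What your approach buys is transparency and independence from the reference; what the paper's approach buys is brevity, since \cite{LR18} already packages the identity between the Balakrishnan-type definition \eqref{dFL}, the pointwise kernel formulas, and the subordinated semigroup, an identification you assert (correctly, by standard Phillips theory) but do not spell out. The only point worth tightening is that density-plus-boundedness extension at the end: you should note explicitly that the limit operator obtained from finitely supported sequences agrees with $e^{tA_h}$ on all of $C_0(\ZZ_h)$ because both are bounded operators agreeing on a dense subspace, and that the symmetry $L^s_n=L^s_{-n}$ disposes of the $j-n$ versus $n-j$ indexing in \eqref{SG-h}.
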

The proof follows from \cite[p.1371, Theorem 1.3]{LR18}.
\begin{remark}
{\em
We notice that $(T_h(t))_{t\geq 0}$ is an uniformly continuous semigroup of contractions on $C_0(\ZZ_h)$ follows from the positivity of the Bessel and L\'evy functions $I_k, f_{x,s}$ and \cite[Theorem 1.3, item (v)]{LR18} which implies that the estimate
\begin{align*}
\|T_h(t)f\|_{C_0(\ZZ_h)} \leq \|f\|_{C_0(\ZZ_h)} \int_0^{\infty} \left[ \sum_{n\in \ZZ} e^{-\frac{2t}{h^{2s}}}I_{n}  \left(\frac{2t}{h^{2s}}\right)\right]f_{t,s}(\lambda)d\lambda = \|f\|_{C_0(\ZZ_h)},
\end{align*}
holds for all $f \in C_0(\ZZ_h).$ 
}
\end{remark}

The following convergence result  taken from \cite[Theorem 1.7]{Setal} will be crucial in the proof of our main results.

\begin{theorem}\label{thm-Stinga}
Let $0<\gamma\le 1$ and $0<s<1$. Then the following assertions holds.
\begin{enumerate}
\item If $U\in C^{0,\gamma}(\RR)$ and $2s<\gamma$, then there is a constant $C>0$ (independent of $U$ and $h$) such that
\begin{align*}
\|(-\Delta_h)^s(\mathcal R_hU)-\mathcal R_h((-\Delta)^sU)\|_{C_0(\ZZ_h)}\le C[U]_{C^{0,\gamma}}h^{\gamma-2s}.
\end{align*}

\item If $U\in C^{1,\gamma}(\RR)$ and $\gamma<2s<1+\gamma$, then there is a constant $C>0$ (independent of $U$ and $h$) such that
\begin{align*}
\|(-\Delta_h)^s(\mathcal R_hU)-\mathcal R_h((-\Delta)^sU)\|_{C_0(\ZZ_h)}\le C[U^\prime]_{C^{0,\gamma}}h^{\gamma-2s+1}.
\end{align*}
\end{enumerate}
\end{theorem}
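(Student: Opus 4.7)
My plan is to prove both estimates by a direct comparison of the singular integral in \eqref{FL} with the discrete series \eqref{eq:DeltahR}. Since both operators commute with translations by $h$ along the mesh $\ZZ_h$, it suffices to control the pointwise difference at $jh=0$, with the bound then automatically uniform in $j$. Using the symmetry $R_h^s(m)=R_h^s(-m)$, I pair $y$ with $-y$ and work with the symmetric second difference $G(y):=U(y)+U(-y)-2U(0)$, which absorbs the principal value and places both operators on the same footing. The quantity to be bounded is thus
\[
\mathcal E(h):=\tfrac{C_s}{2}\int_{\RR}\frac{G(y)}{|y|^{1+2s}}\,dy-\sum_{m\geq 1}R_h^s(m)\,G(mh).
\]

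I then split the line into the central cell $I_0=(-h/2,h/2)$ and far cells $I_m=((m-\tfrac12)h,(m+\tfrac12)h)$ for $m\geq 1$, and write $\mathcal E(h)=\mathcal E_{\mathrm{near}}(h)+\sum_{m\geq 1}\mathcal E_m(h)$. The near-field piece $\mathcal E_{\mathrm{near}}(h)=\tfrac{C_s}{2}\int_{I_0}G(y)|y|^{-1-2s}\,dy$ is handled directly by the regularity of $U$: in case (a), $|G(y)|\leq 2[U]_{C^{0,\gamma}}|y|^\gamma$ together with $2s<\gamma$ yields $|\mathcal E_{\mathrm{near}}|\leq C[U]_{C^{0,\gamma}}h^{\gamma-2s}$; in case (b), a first-order Taylor expansion at $0$ (the odd part cancels in $G$) gives $|G(y)|\leq 2[U']_{C^{0,\gamma}}|y|^{1+\gamma}$, and $2s<1+\gamma$ yields $|\mathcal E_{\mathrm{near}}|\leq C[U']_{C^{0,\gamma}}h^{\gamma-2s+1}$. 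Each far-field error is split into a function-variation piece $\int_{I_m}C_s(G(y)-G(mh))|y|^{-1-2s}\,dy$, bounded via $|G(y)-G(mh)|\leq C[U]_{C^{0,\gamma}}h^\gamma$ on $I_m$ (or the $C^{1,\gamma}$ analogue), and a kernel-discrepancy piece $G(mh)\bigl(\int_{I_m}C_s|y|^{-1-2s}\,dy-R_h^s(m)\bigr)$; summing the function-variation contributions against $(mh)^{-1-2s}$ over $m\geq 1$ reproduces the target rate.

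The main obstacle is the kernel-discrepancy piece, which requires exhibiting a sharp higher-order cancellation between the discrete weight and the midpoint quadrature of the continuous kernel. From Stirling's expansion, $\Gamma(m-s)/\Gamma(m+1+s)=m^{-1-2s}\bigl(1+O(m^{-2})\bigr)$, so $R_h^s(m)=C_sh(mh)^{-1-2s}+O(h^{-2s}m^{-3-2s})$; on the continuous side, Taylor expanding $|y|^{-1-2s}$ about $y=mh$ and using the symmetry of $I_m$ to kill the linear term produces a midpoint quadrature error of the same order $O(h^{3}(mh)^{-3-2s})$. Combining, $|\int_{I_m}C_s|y|^{-1-2s}\,dy-R_h^s(m)|\leq Ch^{-2s}m^{-3-2s}$. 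Using the Hölder bound $|G(mh)|\leq 2[U]_{C^{0,\gamma}}(mh)^\gamma$ (or the $C^{1,\gamma}$ analogue $|G(mh)|\leq 2[U']_{C^{0,\gamma}}(mh)^{1+\gamma}$) and the summability of $\sum_{m\geq 1}m^{\gamma-3-2s}$ (resp.\ $\sum_{m\geq 1}m^{\gamma-2-2s}$), which holds since $\gamma\leq 1$, yields a kernel-discrepancy contribution dominated by the near-field rate in both cases. Collecting all the pieces concludes the proofs of parts (a) and (b).
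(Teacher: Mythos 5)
The paper does not actually prove Theorem \ref{thm-Stinga}: it is imported verbatim from \cite[Theorem 1.7]{Setal}, so there is no in-paper argument to compare against. Your proposal is, in substance, the same direct kernel-comparison proof as in that reference: reduce to $j=0$ by translation invariance, symmetrize to the second difference $G$, and balance a near-field H\"older estimate against a far-field quadrature-consistency estimate driven by the Gamma-ratio asymptotics. The two analytic ingredients you isolate are correct: the constant $4^s\Gamma(\tfrac12+s)/(\sqrt{\pi}\,|\Gamma(-s)|)$ equals $C_s$, so the leading terms of $R_h^s(m)$ and of the midpoint quadrature of $C_s|y|^{-1-2s}$ cancel exactly, and the relative error in $\Gamma(m-s)/\Gamma(m+1+s)=m^{-1-2s}(1+O(m^{-2}))$ is indeed $O(m^{-2})$ because the $1/m$ coefficient $(a-b)(a+b-1)/2$ vanishes for $a=-s$, $b=1+s$ (though even an $O(m^{-1})$ relative error would suffice here, since $\gamma\le 1$ keeps the resulting series summable). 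Two points should be made explicit before this counts as a complete proof. First, in case (b) the ``$C^{1,\gamma}$ analogue'' of the far-field variation bound is \emph{not} $|G(y)-G(mh)|\le C[U']_{C^{0,\gamma}}|y-mh|^{1+\gamma}$ --- that is false, since $G'(mh)=U'(mh)-U'(-mh)\ne 0$ in general; the correct bound comes from $G'(t)=U'(t)-U'(-t)$, whence $|G(y)-G(mh)|\le 2^{\gamma}[U']_{C^{0,\gamma}}\,|y-mh|\,(mh+h/2)^{\gamma}\le C[U']_{C^{0,\gamma}}\,h\,(mh)^{\gamma}$, and the resulting series $\sum_{m\ge 1}m^{\gamma-1-2s}$ converges precisely because $\gamma<2s$. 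This is the only place where the left half of the hypothesis $\gamma<2s<1+\gamma$ is used, and your sketch never identifies where it enters --- for a sharp two-sided condition that is the step that must be written out. Second, the rearrangement $\mathcal E=\mathcal E_{\mathrm{near}}+\sum_{m\ge1}\mathcal E_m$ tacitly assumes that the far-field integral $\int_{|y|>h/2}|G(y)|\,|y|^{-1-2s}dy$ and the series $\sum_m R_h^s(m)|G(mh)|$ converge separately; with only $[U]_{C^{0,\gamma}}<\infty$ and $2s<\gamma$ the H\"older bound $|G(y)|\lesssim |y|^{\gamma}$ does not give this at infinity, so one must invoke the standing assumptions under which both operators are even defined (e.g.\ $U\in L^\infty(\RR)$, or $U\in L_s$ and $\mathcal R_hU\in\ell_s$ as in \cite{Setal}). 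With those two items supplied, the argument is correct and yields the stated rates.
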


\section{Proof of the main result}\label{sec-pro}

 In this section we give the proof of our main result, namely, Theorem \ref{main-theorem1}.

\begin{proof}[\bf Proof of Theorem \ref{main-theorem1}]
We prove the result in several steps. Throughout the proof if $v_h$, $w_h\in C_0(\ZZ_h)$ are such that $\lim_{h\downarrow 0}\|v_h-w_h\|_{C_0(\ZZ_h)}=0$, then we shall sometimes write $v_h=w_h+o(h)$.\\

{\bf Step 1}: Let $\mathcal R_h$ be the mapping defined in \eqref{mapP}. We claim that $\mathcal R_h$ and $C_0(\ZZ_h)$ approximate the Banach space $C_0(\RR)$ in the sense of Definition \ref{def-21}. Indeed, firstly it is clear that $\|\mathcal R_hF\|_{C_0(\ZZ_h)}\le \|F\|_{C_0(\RR)}$ for every $F\in C_0(\RR)$. Secondly, let $h>0$ be a fixed real number. Then, for every $x\in \RR$, there exists $j\in \ZZ$ such that $hj\le x< (j+1)h$. This implies that $|x-hj|\to 0$ as $h\to 0$.  This fact, together with the previous observation imply that $\lim_{h\to 0}\|\mathcal R_hF\|_h=\|F\|_{C_0(\RR)}$ for the every $F\in C_0(\RR)$ and the claim is proved.

We also notice that, since $\mathcal R_h$ and  $C_0(\ZZ_h)$ approximate  $C_0(\RR)$, it follows that
for every  $g_h\in C_0(\ZZ_h)$, there exists a function $g\in C_0(\RR)$ such that
\begin{equation}\label{lim}
 \lim_{h\downarrow 0}\|\mathcal R_hg-g_h\|_{C_0(\ZZ_h)}=0.
\end{equation}

{\bf Step 2}:  Let $T=(e^{-t(-\Delta)_c^s})_{t\ge 0}$ and $T_h:=(e^{-t(-\Delta_h)^s})_{t\ge 0}$ be the strongly continuous and contractive semigroups  given in Propositions \ref{prop-23} and \ref{Prop-SG-h}, respectively.
We claim that for every $u^0\in C_0(\RR)$ and $\mathcal T>0$, we have
\begin{align}\label{semi-ine}
\lim_{h\downarrow 0}\sup_{0\le t\le \mathcal T} \|\mathcal R_h(T(t)u^0)-T_h(t)\mathcal R_hu^0\|_{C_0(\ZZ_h)}=0.
\end{align}

It suffices to prove  \eqref{semi-ine} for every $u_0$ in a dense subspace of $C_0(\RR)$. Indeed, let $u_0\in D((-\Delta)_c^s)\cap C_c^{1,1}(\RR)$.  It follows from Theorem \ref{thm-Stinga} that
\begin{align}\label{in-ST}
\lim_{h\downarrow 0}\| (-\Delta_h)^s\mathcal R_hu_0-\mathcal R_h((-\Delta)^su_0)\|_{C_0(\ZZ_h)}=0.
\end{align}
Using Step 1, the convergence in \eqref{in-ST} and applying Theorem \ref{thm-Kato}, we can deduce that \eqref{semi-ine} holds.\\

{\bf Step 3}: We prove  \eqref{eq-inh} for the case $\alpha=1$.
Recall that by Theorem \ref{existence}, under the assumption \eqref{eq-LLC} on the nonlinearity $f$,
there exist  local strong solutions $u^1\in C([0,T_{\max,1}); C_0(\RR))$ and $u_h^1\in C([0,T_{\max,2})); C_0(\ZZ_h))$, for some $T_{\max,1}>0$ and $T_{\max,2}>0$.  In addition, using semigroups theory, we have that
\begin{align}\label{SG1}
u^1(t)=T(t)u^0+\int_0^tT(t-\tau)f(u^1(\tau))\;d\tau,
\end{align}
for every $t\in [0,T_{\max,1})$ and
\begin{align}\label{SG2}
u_h^1(t)=T_h(t)u_h^0+\int_0^tT_h(t-\tau)f(u_h^1(\tau))\;d\tau,
\end{align}
for every $t\in [0,T_{\max,2})$.

Next, let $0\le \mathcal T<T_{\max}:=\min\{T_{\max,1},T_{\max,2}\}$ be an arbitrary real number. Let $u_h^0\in C_0(\ZZ_h)$ be fixed. Choose a function $u^0\in C_0(\RR)$ satisfying \eqref{lim}. Then, using \eqref{semi-ine} and the representations \eqref{SG1}-\eqref{SG2}, we get that for every $t\in [0,\mathcal T]$,
\begin{align}\label{Wm}
\mathcal R_hu^1(t)=&\mathcal R_h(T(t)u^0)+\int_0^t\mathcal R_h\Big(T(t-\tau)f(u^1(\tau))\Big)\;d\tau\notag\\
=&\Big(\mathcal R_h(T(t)u^0)-T_h(t)\mathcal R_hu^0\Big) +T_h(t)\mathcal R_hu^0+\int_0^t\mathcal R_h\Big(T(t-\tau)f(u^1(\tau))\Big)\;d\tau\notag\\
=&o(h)+T_h(t)\mathcal R_hu^0+\int_0^t\Big(T_h(t-\tau)\mathcal R_hf(u^1(\tau))+o(h)\Big)\;d\tau\notag\\
=&o(h)+T_h(t)\mathcal R_hu^0 +\int_0^tT_h(t-\tau)f(\mathcal R_hu^1(\tau))\;d\tau+to(h).
\end{align}
It follows from \eqref{Wm} that
\begin{align}\label{AAA}
\mathcal R_hu^1(t)-u_h^1(t)=& (1+t)o(h)+T_h(t)\mathcal R_hu^0-T_h(t)u_h^0\notag\\
&+\int_0^tT_h(t-\tau)\Big(f(\mathcal R_hu^1(\tau))-f(u_h^1(\tau))\Big)\;d\tau\notag\\
=&(2+t)o(h) +\int_0^tT_h(t-\tau)\Big(f(\mathcal R_hu^1(\tau))-f(u_h^1(\tau))\Big)\;d\tau,
\end{align}
where we have used the fact that
\begin{align*}
\|T_h(t)\mathcal R_hu^0-T_h(t)u_h^0\|_{C_0(\ZZ_h)}=\|T_h(t)\Big(\mathcal R_hu^0-u_h^0\Big)\|_{C_0(\ZZ_h)}\le \|\mathcal R_hu^0-u_h^0\|_{C_0(\ZZ_h)},
\end{align*}
which converges to zero as $h\downarrow 0$ by \eqref{lim}.

Now set
\begin{align*}
\Psi_h(t):=\|\mathcal R_hu^1(t)-u_h^1(t)\|_{C_0(\ZZ_h)},\;\;0\le t\le\mathcal T.
\end{align*}
We notice that
\begin{align*}
\Psi_h(0)=o(h).
\end{align*}
Using the fact that $\|T_h(t)g_h\|_h\le \|g_h\|_h$ for every $g_h\in C_0(\ZZ_h)$ and the local Lipschitz continuity assumption \eqref{eq-LLC} on $f$, we can deduce from \eqref{AAA} that there is a constant $K>0$ (depending only on $\mathcal T$ and the Lipschitz constant $M$) such that
\begin{align*}
\Psi_h(t)\le Kh+K\int_0^t\Psi_h(\tau)\;d\tau,\;\;\;0\le t\le\mathcal T.
\end{align*}
Using Gronwall's inequality, the above estimate implies that
\begin{align*}
\Psi_h(t) \leq Kh e^{Kt}\;\mbox{ for all }\; t\in  [0,\mathcal T].
\end{align*}
We have shown \eqref{eq-inh} for $\alpha=1$. \\

{\bf Step 4}: Next, we prove \eqref{eq-inh} for the case $0<\alpha<1$.
Firstly, we recall that under the assumption \eqref{eq-LLC} on $f$, Theorem \ref{existence} implies
the existence of local strong solutions $u^\alpha\in C([0,T_{\max,1}); C_0(\RR))$ and $u_h^\alpha\in C([0,T_{\max,2}); C_0(\ZZ_h))$, for some $T_{\max,1}>0$ and $T_{\max,2}>0$. In addition, using the theory of fractional order Cauchy problems (see e.g. \cite{Bah, KLW1,KLW2,KLW3}), we have  that
\begin{align}\label{u}
u^\alpha(t)=\mathbb S^\alpha(t)u^0+\int_0^t\mathbb P^\alpha(t-\tau)f(u^\alpha(\tau))\;d\tau
\end{align}
for every $0\le t<T_{\max,1}$, and
\begin{align}\label{uh}
u_h^\alpha(t)=\mathbb S^\alpha_h(t)u_h^0+\int_0^t\mathbb P^\alpha_h(t-\tau)f(u_h^\alpha(\tau))\;d\tau,
\end{align}
for every $0\le t<T_{\max,2}$, where the operators $\mathbb S^\alpha(t)$ (for every $t\ge 0$) and  $\mathbb P^\alpha(t)$ (for every $t>0$) are given for every $g\in C_0(\RR)$  by
\begin{align}\label{SS1}
\mathbb S^\alpha(t)g=\int_0^\infty\Phi_\alpha(\tau)T(\tau t^\alpha)g\;d\tau,\;\;\mathbb P^\alpha(t)g=\alpha t^{\alpha-1}\int_0^\infty\tau\Phi_\alpha(\tau)T(\tau t^\alpha)g\;d\tau,
\end{align}
and $\mathbb S_h^\alpha(t)$ (for every $t\ge 0$) and $\mathbb P_h^\alpha(t)$ (for every $t> 0$) are given for $g_h\in C_0(\ZZ_h)$ by
\begin{align}\label{PP1}
\mathbb S_h^\alpha(t)g_h=\int_0^\infty\Phi_\alpha(\tau)T_h(\tau t^\alpha)g_h\;d\tau,\;\;\mathbb P_h^\alpha(t)g_h=\alpha t^{\alpha-1}\int_0^\infty\tau\Phi_\alpha(\tau)T_h(\tau t^\alpha)g_h\;d\tau,
\end{align}
and we recall that $\Phi_{\alpha}$ is the Wright function defined in \eqref{wright}.\\

{\bf Step 5}: Let $u_h^0\in C_0(\ZZ_h)$ and $u^0\in C_0(\RR)$ satisfy \eqref{lim}. We claim that for every $\mathcal T>0$, we have
\begin{align}\label{claim1}
\lim_{h\downarrow 0}\sup_{0\le t\le \mathcal T}\|\mathcal R_h(\mathbb S^\alpha(t)u^0)-\mathbb S_h^\alpha(t)u_h^0\|_{C_0(\ZZ_h)}=0.
\end{align}
Indeed, using the representations \eqref{SS1}-\eqref{PP1} and Step 2, we can deduce that for every $t\ge 0$ we have
\begin{align}\label{Wm2}
\mathcal R_h(\mathbb S^\alpha(t)u^0)=&\int_0^\infty\Phi_\alpha(\tau)\mathcal R_h(T(\tau t^\alpha)u_0)\;d\tau\notag\\
=&\int_0^\infty\Phi_\alpha(\tau)\Big(T_h(\tau t^\alpha)\mathcal R_hu_0+o(h)\Big)\;d\tau\notag\\
=&\int_0^\infty\Phi_\alpha(\tau)T_h(\tau t^\alpha)\mathcal R_hu_0\;d\tau +o(h)\notag\\
=&\mathbb S_h^\alpha(t)\mathcal R_hu^0 +o(h),
\end{align}
where in the third equality we have used \eqref{moment}. It follows from \eqref{Wm2} that

\begin{align*}
\mathcal R_h(\mathbb S^\alpha(t)u^0)-\mathbb S_h^\alpha(t)u_h^0=\mathbb S_h^\alpha(t)\mathcal R_hu^0 -\mathbb S_h^\alpha(t)u_h^0 +o(h)=o(h),
\end{align*}
where we have also used that
\begin{align*}
\|\mathbb S_h^\alpha(t)\mathcal R_hu^0 -\mathbb S_h^\alpha(t)u_h^0 \|_{C_0(\ZZ_h)}&\le \int_0^\infty\Phi_\alpha(\tau)\|T_h(\tau t^\alpha)(\mathcal R_hu^0-u_h^0)\|_{C_0(\ZZ_h)}\;d\tau\\
&\le \int_0^\infty\Phi_\alpha(\tau)\|\mathcal R_hu^0-u_h^0\|_{C_0(\ZZ_h)}\;d\tau\\
&\le \|\mathcal R_hu^0-u_h^0\|_{C_0(\ZZ_h)}=o(h),
\end{align*}
which follows from \eqref{lim}. The proof of the claim \eqref{claim1} is finished.\\

{\bf Step 6:} Let $u_h^0\in C_0(\ZZ_h)$ and $u^0\in C_0(\RR)$ satisfy \eqref{lim}.  We claim that for every $ \mathcal T>0$, we have
\begin{align}\label{claim2}
\lim_{h\downarrow 0}\sup_{0\le t\le \mathcal T}\Big(t^{1-\alpha}\|\mathcal R_h(\mathbb P^\alpha(t)u^0)-\mathbb P_h^\alpha(t)u_h^0\|_{C_0(\ZZ_h)}\Big)=0.
\end{align}
Using the representations \eqref{SS1}-\eqref{PP1} and proceeding as in the proof of  \eqref{claim1}, we easily get that
\begin{align*}
t^{1-\alpha}&\|\mathcal R_h(\mathbb P^\alpha(t)u^0)-\mathbb P_h^\alpha (t)u_h^0\|_{C_0(\ZZ_h)}\\
\le&\alpha \int_0^\infty\tau\Phi_\alpha(\tau)\|\mathcal R_h(T(\tau t^\alpha)u^0)-T_h(\tau t^\alpha) u_h^0\|_{C_0(\ZZ_h)}\\
\le &\alpha \cdot o(h)\int_0^\infty\tau\Phi_\alpha(\tau)\;d\tau
=\frac{\alpha\Gamma(2)}{\Gamma(\alpha+1)}o(h),
\end{align*}
where we have also used \eqref{moment}.
The proof of the claim \eqref{claim2} is complete.\\

{\bf Step 7}: Finally, set $T_{\max}:=\min\{T_{\max,1},T_{\max,2}\}$, and let $0\le \mathcal T<T_{\max}$. Then, using \eqref{claim1}, \eqref{claim2} and the representations \eqref{u}-\eqref{uh},  we get that for every $0\le t\le\mathcal T$,
\begin{align*}
\mathcal R_hu^\alpha(t)=&\mathcal R_h(\mathbb S^\alpha(t)u^0)+\int_0^t\mathcal R_h(\mathbb P^\alpha(t-\tau)f(u^\alpha(\tau)))\;d\tau\\
=&\mathbb S^\alpha_h(t)u_h^0 +o(h)+\int_0^t\Big(\mathbb P^\alpha_h(t-\tau)\mathcal R_hf(u^\alpha(\tau))+(t-\tau)^{\alpha-1}o(h)\Bigg)\;d\tau\\
=&\mathbb S^\alpha_h(t)u_h^0 +o(h)+\int_0^t\mathbb P^\alpha_h(t-\tau)f(\mathcal R_hu^\alpha(\tau))\;d\tau+t^\alpha o(h).
\end{align*}
This identity implies that
\begin{align}\label{EST}
\mathcal R_hu^\alpha(t)-u_h^\alpha(t)= (1+t^\alpha) o(h)+\int_0^t\mathbb P^\alpha_h(t-\tau)\Big(f(\mathcal R_hu^\alpha(\tau))-f(u_h^\alpha(\tau))\Big)\;d\tau,
\end{align}
for every $0\le t\le\mathcal T$.

Let
\begin{align*}
\Psi_h(t):=\|\mathcal R_hu^\alpha(t)-u_h^\alpha(t)\|_{C_0(\ZZ_h)},\;\;0\le t\le\mathcal T.
\end{align*}
We notice that
\begin{align*}
\Psi_h(0)=o(h).
\end{align*}
Using \eqref{PP1}, the fact that $\|T_h(t)g_h\|_h\le\|g_h\|_h$ for every $g_h\in C_0(\ZZ_h)$ and proceeding as in Step 6, we can easily deduce that for every $t>0$, we have 
\begin{align}\label{est-GW}
\|\mathbb P^\alpha_h(t)g_h\|_{C_0(\ZZ_h)}\le  t^{\alpha-1}\|g_h\|_{C_0(\ZZ_h)}.
\end{align}
Using \eqref{est-GW} and the assumption \eqref{eq-LLC} on the nonlinearity $f$, we get from \eqref{EST} that there is a constant $K>0$ (depending only on $\mathcal T$ and the Lipschitz constant $M$) such that
\begin{align}\label{mw}
\Psi_h(t)\le Kh+K\int_0^t(t-\tau)^{\alpha-1}\Psi_h(\tau)\;d\tau,\;\;\;\;0\le t\le\mathcal T.
\end{align}
Using the generalized Gronwall's inequality (see e.g. \cite[Theorem 1]{YGY}), we immediately deduce from \eqref{mw} that  for all $t\in  [0,\mathcal T]$, we have
\begin{align}\label{CL}
\Psi_h(t) \leq &Kh \Big(1+\int_0^t\sum_{n=1}^\infty\frac{(\Gamma(\alpha))^n}{\Gamma(\alpha n)}(t-\tau)^{\alpha n-1}\;d\tau\Big)\notag\\
=&Kh \Big(1+\sum_{n=1}^\infty\frac{(\Gamma(\alpha))^n}{\Gamma(\alpha n+1)}t^{\alpha n}\Big)=Kh E_{\alpha,1} (\Gamma(\alpha)t^\alpha).
\end{align}
The estimate \eqref{CL} together with the fact that the function $E_{\alpha,1}$ is monotone on $[0,\infty)$ yield
\begin{align*}
\lim_{h\downarrow 0}\sup_{0\le t\le \mathcal T}\Psi_h(t)=0.
\end{align*}
We have shown \eqref{eq-inh} for $0<\alpha<1$. The proof of the theorem is finished.
\end{proof}

We conclude this section with the following observation.

\begin{remark}
{\em It follows from the proof of Theorem \ref{main-theorem1}, that in particular,
for every $u_0\in C_0(\RR)$, taking $u_h^0:=\mathcal R_hu^0$, we have that the corresponding local strong solutions $u^\alpha$ and $u_h^\alpha$ satisfy
\begin{align*}
\lim_{h\downarrow 0}\sup_{0\le t\le \mathcal T}\|\mathcal R_hu^\alpha(t)-u_h^\alpha(t)\|_{C_0(\ZZ_h)}=0,
\end{align*}
for every $\mathcal T\in [0,T_{\max})$, where we recall that $T_{\max}:=\min\{T_{\max,1},T_{\max,2}\}$.
}
\end{remark}


\section{Numerical Examples}\label{numerics}
The main goal of this section is to numerically study the validity of \eqref{eq-inh} with the help of two examples. We focus on the case when $s \in (0,1)$ but $\alpha =1$, i.e., the fractional Laplacian with standard time derivative. Moreover, we consider the linear case, however our  implementation directly extends to the semi-linear case after applying Newton type methods. In our first example the solution $u$ is supported in $(0,T) \times \mathbb{R}$ with $T < +\infty$ and in our second example $u$ is compactly supported in $[0,T] \times [-1,1]$ with $T < +\infty$. In both cases, we have set $T = 1$. Before we move on to our examples, we emphasize that numerically we
find that the formulation \eqref{eq:DeltahK} is more suitable than \eqref{eq:DeltahR} and therefore all our
examples use \eqref{eq:DeltahK} to calculate the discrete fractional Laplacian.

\subsection{Example 1}

Let the exact solution to \eqref{EE1} be the function
    \[
        u(t,x) := \left(1+x^2\right)^{-\left(\frac12-s\right)} e^{-t}
    \]
supported on $(0,1)\times \mathbb{R}$. Substituting this expression of $u$ in \eqref{EE1},
we obtain that
    \[
        f(t,x) := \partial_t u + (-\Delta)^s u
               = -u(t,x) + \frac{4^s \Gamma(1/2+s)}{\Gamma(1/2-s)} \left(1+x^2\right)^{-\left(\frac12+s\right)} e^{-t}  .
    \]
With this value of $f$ as the right-hand-side, we next numerically solve \eqref{EE2}
for $u_h$. Here we calculate $(-\Delta_h)^s$ using the formula \eqref{eq:DeltahK}. We apply
Backward Euler discretization in time with a time step $\delta t = 10^{-3}$. In
Figure~\ref{f:1}, we study the norm in \eqref{eq-inh} as $h$ decreases. Notice that
our problem is posed on the unbounded domain $(0,1) \times \mathbb{R}$, to make it
numerically tractable we instead solve \eqref{EE2} on a bounded domain
$(a,b) \subset \mathbb{R}$ with $a = -10^3$ and $b = 10^3$. We then measure the
error in a subdomain $(-10^2,10^2) \subset (a,b)$.
 
\begin{figure}[H]
\centering
    \includegraphics[width=0.5\textwidth]{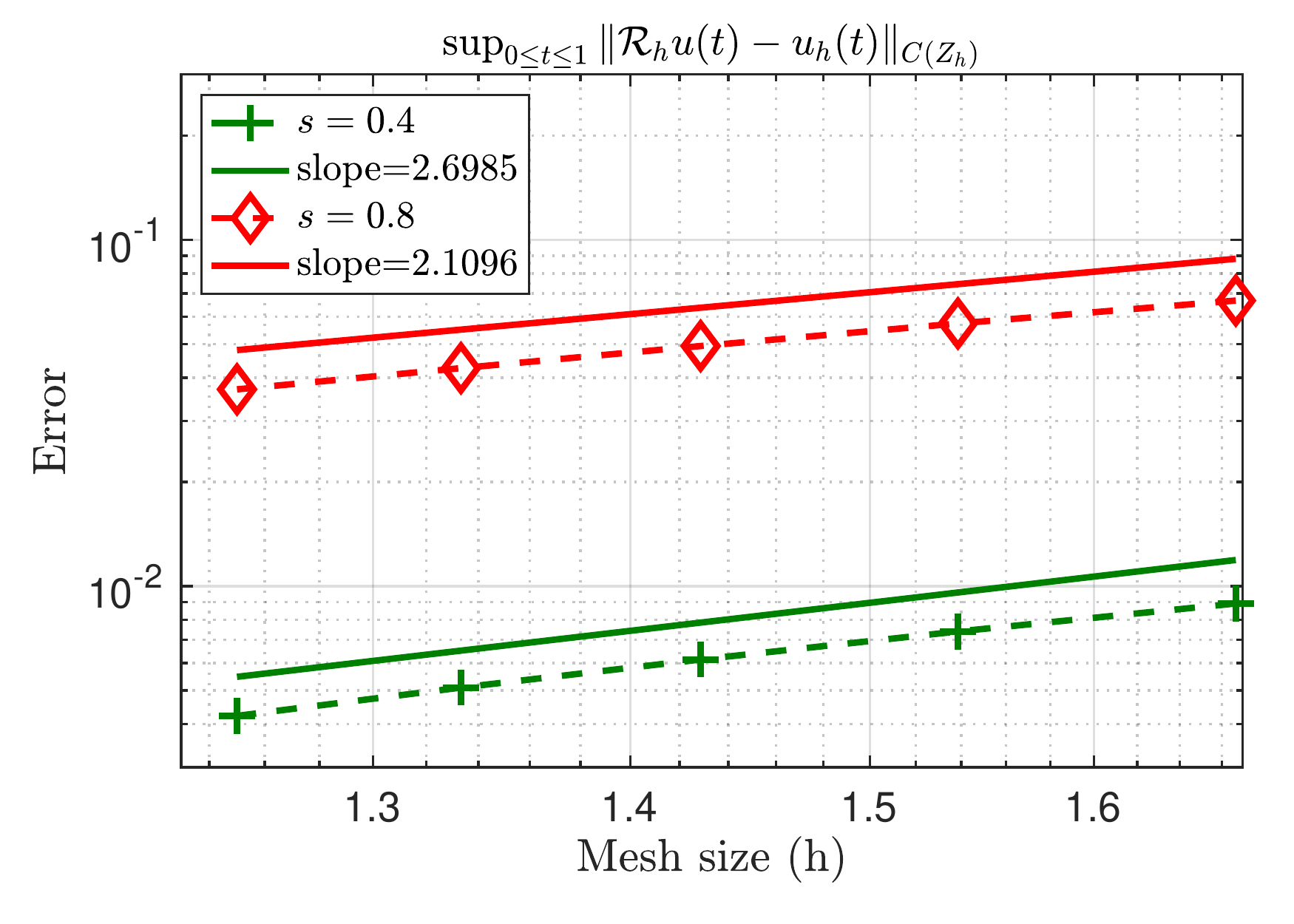}
    \caption{\label{f:1}Example 1: The figure illustrates the error \eqref{eq-inh} as we refine $h$.
    The results are in accordance to Theorem~\ref{main-theorem1}, i.e., the required error goes to
    zero as $h\downarrow 0$. Additionally, we obtain a rate of convergence which appears to be
    $s$-dependent.}
\end{figure}

Figure~\ref{f:1} clearly supports the result of Theorem~\ref{main-theorem1}, i.e.,
the required error goes to zero as $h\downarrow 0$. In addition, we seem to observe a
$s$-dependent rate of convergence. We emphasize that the theoretical justification
of such a rate of convergence is still an open question.

Finally, Figure~\ref{f:comp1} shows a visual comparison between the computed and
the exact solutions at time instance $t = 0.5$ for $s = 0.4$ and $s = 0.8$, respectively.
\begin{figure}[H]
\centering
    \includegraphics[width=0.45\textwidth]{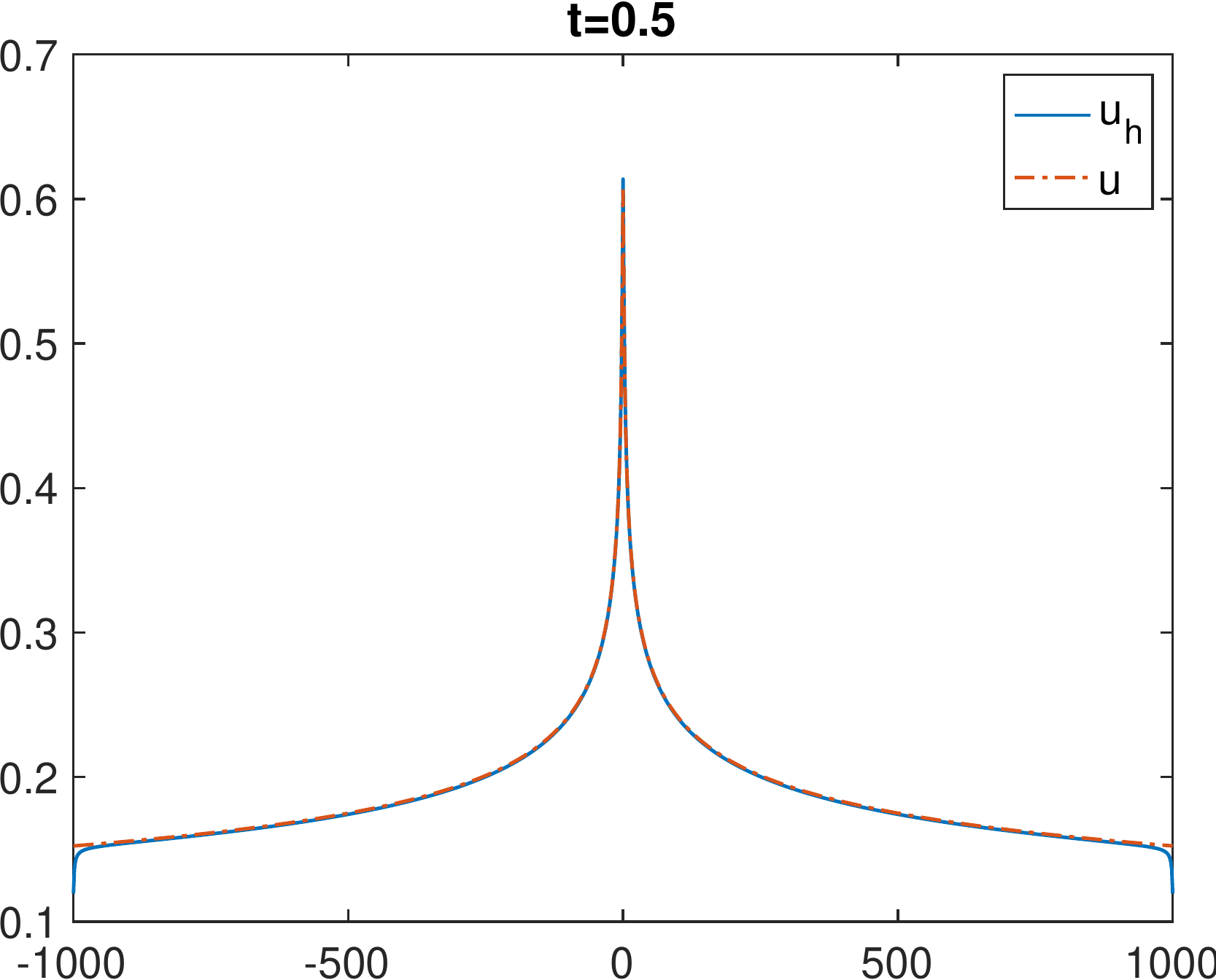}
    \includegraphics[width=0.45\textwidth]{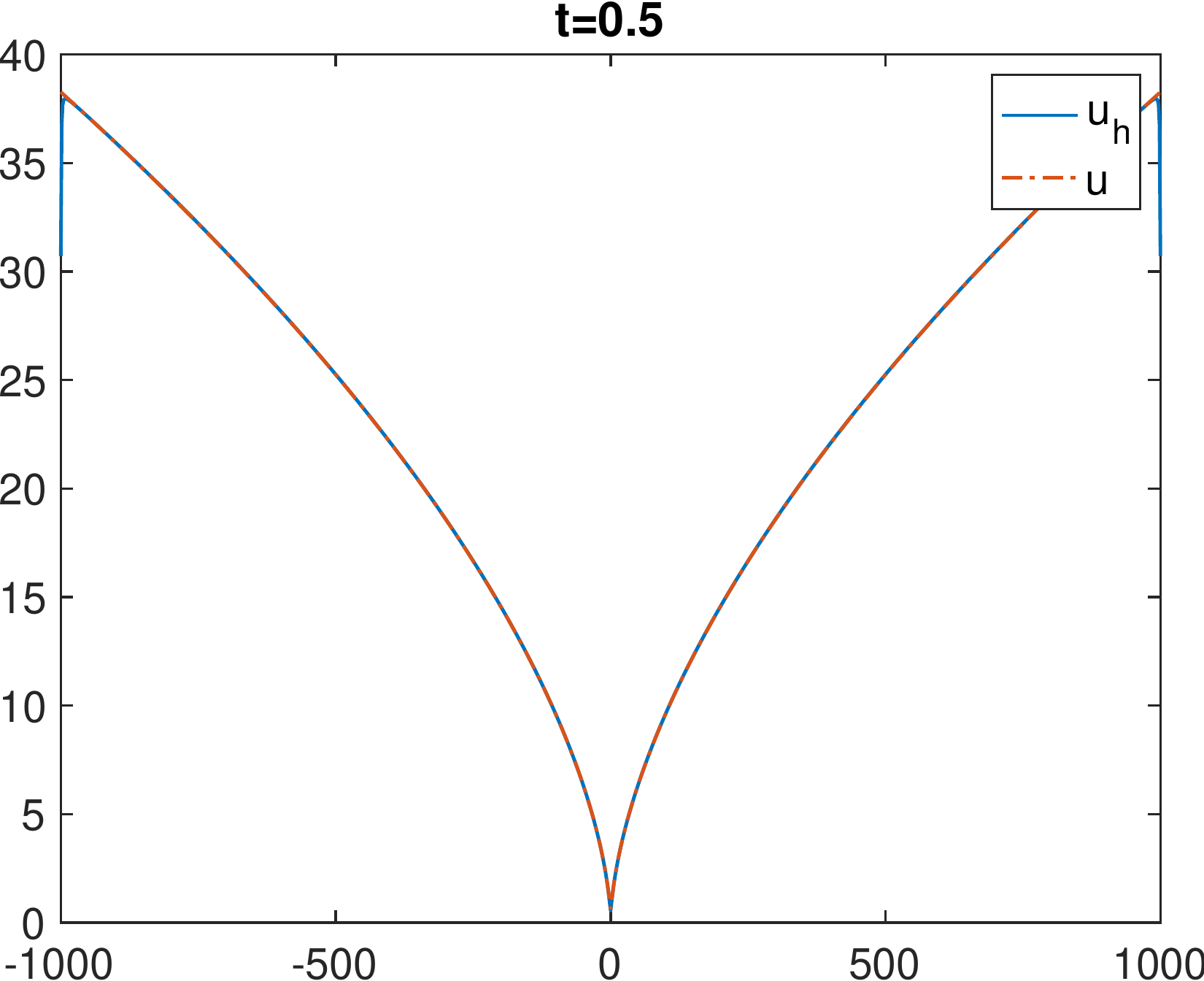}
    \caption{\label{f:comp1}Example 1: A visual comparison between $u$ and $u_h$ for $h=1.6667$
    at $t = 0.5$. The left and right panels corresponds to $s = 0.4$ and $s = 0.8$, respectively.}
\end{figure}

\subsection{Example 2}

In our second example, we consider the following nonsmooth exact solution to
\eqref{EE1}:
    \[
        u(t,x) := \frac{2^{-2s}}{\Gamma(1+s)^2}\left(1-|x|^2\right)^s_{+} e^{-t} .
    \]
Notice that, unlike the previous example, $u$ in this case is compactly
supported in $[0,1] \times [-1,1]$. Next, we define the right-hand-side $f$
as
    \[
        f(t,x) := \partial_t u + (-\Delta)^s u
               = -u(t,x) + e^{-t} .
    \]
Similarly to the previous example, we use this $f$ and solve \eqref{EE2} on
$(0,1) \times (-1,1)$. Again, we employ Backward Euler discretization in
time with a step size $\delta t = 10^{-3}$. Figure~\ref{f:2} (left) shows
the norm in \eqref{eq-inh} as $h$ decreases.
As in the previous example, we observe that this example again supports the result
of Theorem~\ref{main-theorem1}, i.e., the required error goes to zero as $h\downarrow 0$.
In addition, we observe a certain $s$-dependent rate of convergence.
The right panel in Figure~\ref{f:2} shows a visual comparison between $u$ and $u_h$ for $s=0.1$ at
time instance $t = 0.5$.

\begin{figure}[H]
\centering
    \includegraphics[width=0.5\textwidth]{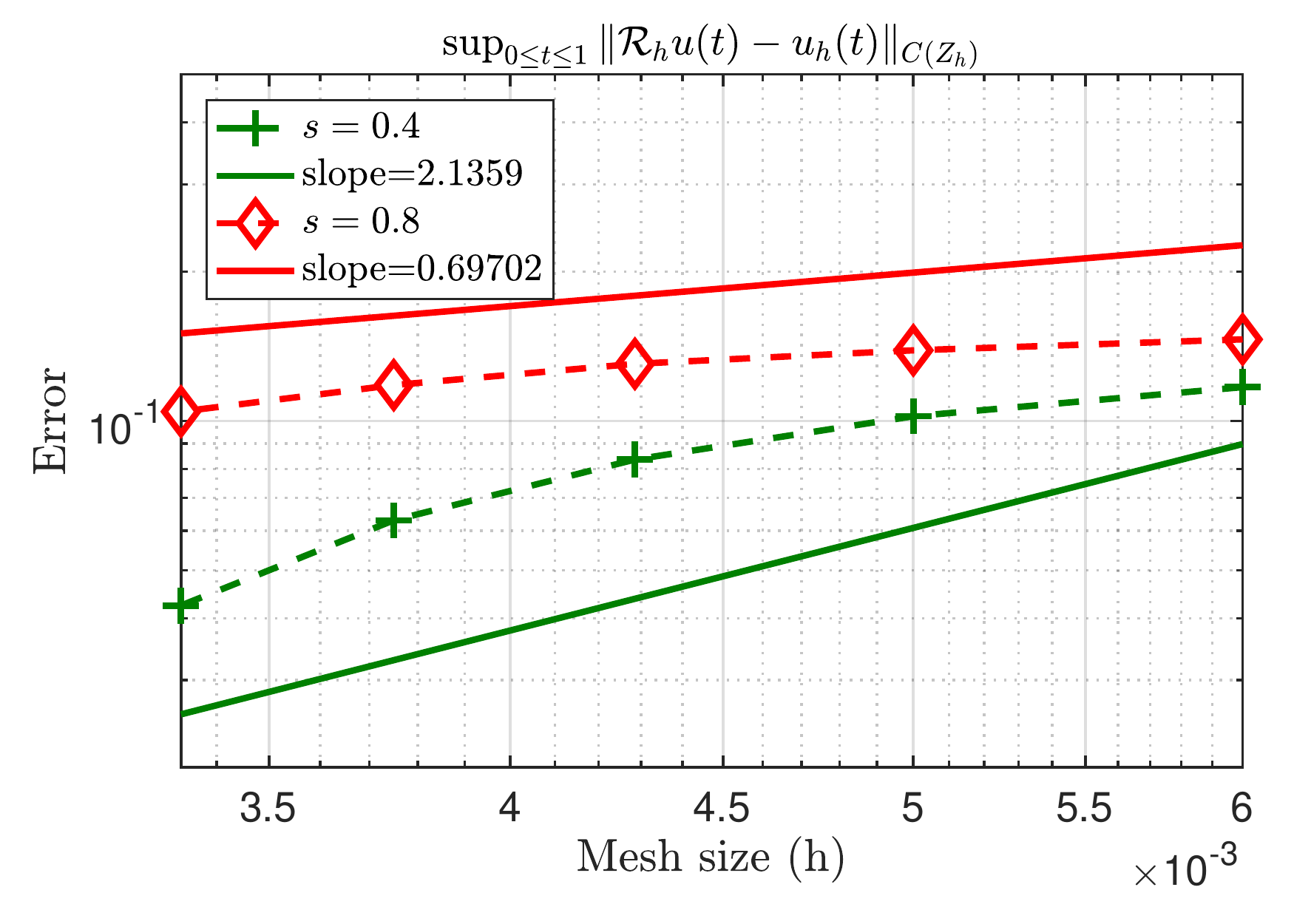}
    \includegraphics[width=0.45\textwidth]{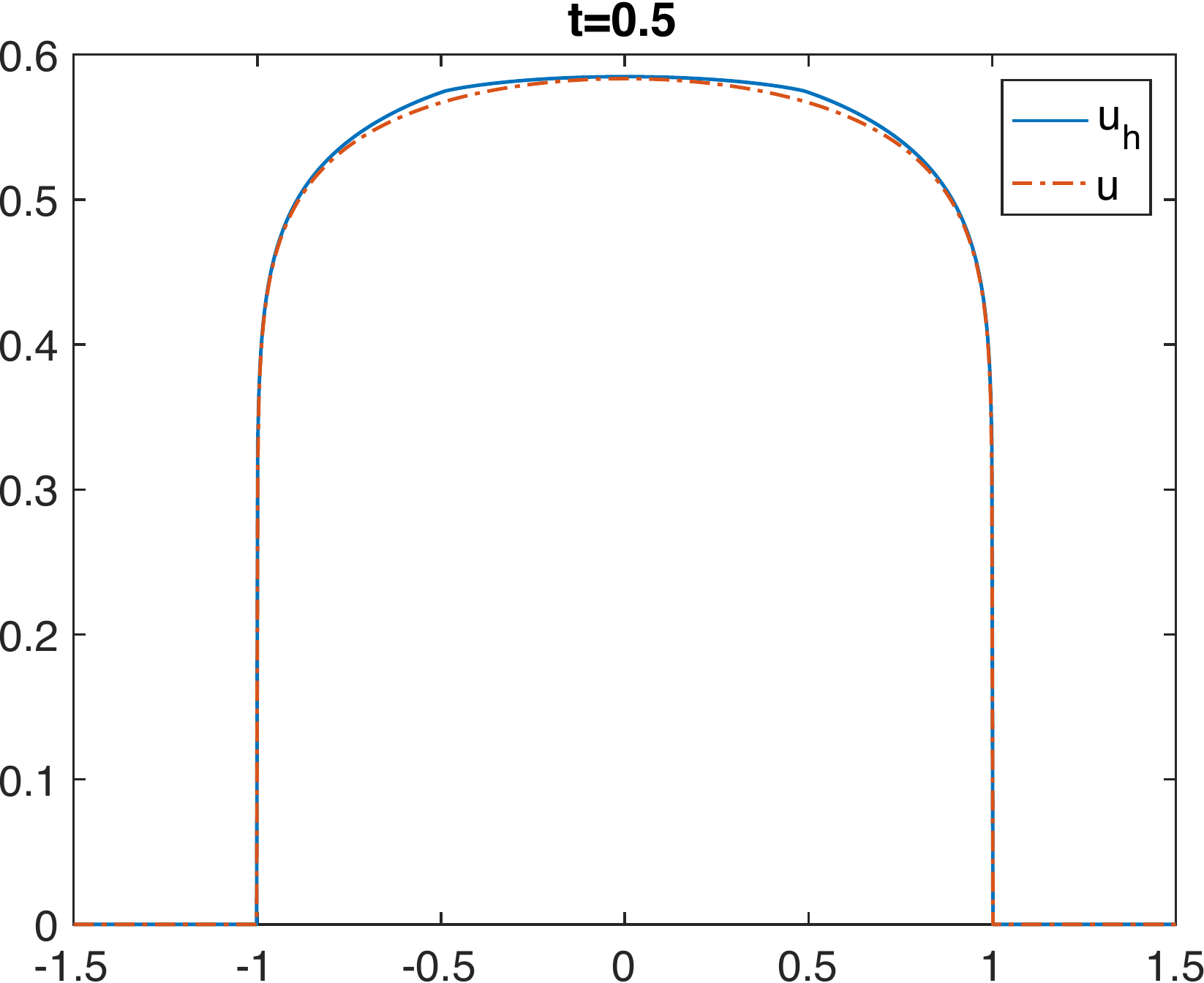}
    \caption{\label{f:2}Example 2: The left panel illustrates the error \eqref{eq-inh} as we refine $h$ and it
    confirms our theoretical findings in Theorem~\ref{main-theorem1}, i.e., the error goes to zero as $h\downarrow 0$.
    Additionally, we obtain a rate of convergence which appears to be $s$-dependent. The right-panel illustrates a
    visual comparison for $s=0.1$, $h=0.003$ at $t = 0.5$.}
\end{figure}

\bibliographystyle{plain}
\bibliography{refs}

\end{document}